\newtheorem{theorem}{Theorem}[section]
\newtheorem{lemma}[theorem]{Lemma}
\newtheorem{corollary}[theorem]{Corollary}
\theoremstyle{definition}
\numberwithin{equation}{section}
\newcommand{\bbZ}{\mathbb{Z}}
\newcommand{\bbR}{\mathbb{R}}
\newcommand{\bbC}{\mathbb{C}}
\newcommand{\bbD}{\mathbb{D}}
\newcommand{\bbH}{\mathbb{H}}
\newcommand{\bbN}{\mathbb{N}}
\newcommand{\dtpi}{\frac{d\theta}{2\pi}}
\newcommand{\eitheta}{e^{i\theta}}
\newcommand{\mcm}{\mathcal{M}}
\newcommand{\mcn}{\mathcal{N}}
\newcommand{\Real}{\textrm{Re}}
\newcommand{\bard}{\overline{\bbD}}
\newcommand{\barc}{\overline{\bbC}}
\newcommand{\barg}{\overline{G}}
\newcommand{\supp}{\textrm{supp}}
\newcommand{\pch}{\textrm{Pch}}
\newcommand{\ch}{\textrm{ch}}
\newcommand{\nch}{\not\in\textrm{ch}}
\newcommand{\ltm}{L^2(\mu)}
\newcommand{\nri}{n\rightarrow\infty}
\begin{document}

\title[ ] {A New Approach to Ratio Asymptotics for Orthogonal Polynomials}

\bibliographystyle{plain}

\thanks{  }

\vspace{-7mm}

\maketitle

\begin{center}
\textbf{Brian Simanek}\\
\vspace{2mm}
\small{Mathematics MC 253-37, California Institute of Technology}\\
\small{Pasadena, CA 91125, USA. E-mail: bsimanek@caltech.edu}
\end{center}

\begin{abstract}
We use a non-linear characterization of orthonormal polynomials due to Saff in order to show that the behavior of orthonormal polynomials is uniquely determined by its normalization and its leading coefficient.  Several applications of this result are also discussed.  One of our main theorems is that for regular measures on the closed unit disk - including, but not limited to the unit circle - one has ratio asymptotics along a sequence of asymptotic density $1$.
\end{abstract}

\vspace{4mm}

\footnotesize\noindent\textbf{Keywords:} Orthogonal polynomials, Ratio asymptotics, Relative Asymptotics, Regular measures

\vspace{2mm}

\noindent\textbf{Mathematics Subject Classification:} 42C05

\vspace{2mm}

\normalsize

\section{Introduction}\label{intro}

Throughout this paper, we will consider a compactly supported, positive, and finite measure $\mu$ with infinite support in the complex plane.  Given such a measure, one can perform Gram-Schmidt orthogonalization on the sequence $\{1,z,z^2,z^3,\ldots\}$ in the space $L^2(\mu)$ and arrive at the sequence $\{p_n(z;\mu)\}_{n=0}^{\infty}$ of orthonormal polynomials, which satisfy
\[
\int_{\bbC}\overline{p_n(z;\mu)}p_m(z;\mu)d\mu(z)=\delta_{nm}.
\]
We will write $p_n(z;\mu)=\kappa_nz^n+\cdots$, where $\kappa_n>0$.  The polynomial $p_n(z;\mu)\kappa_n^{-1}$ is a monic polynomial and will be denoted by $\Phi_n(z;\mu)$.  This polynomial satisfies
\[
\|\Phi_n(\cdot;\mu)\|_{L^2(\mu)}=\inf\left\{\|Q\|_{L^2(\mu)}:Q=z^n+\mbox{lower order terms}\right\},
\]
a property called the \textit{extremal property}.  We also note that $\kappa_n^{-1}=\|\Phi_n(\cdot;\mu)\|_{\ltm}$.

For a measure $\mu$ with compact support $\supp(\mu)$, we will let $\ch(\mu)$ denote the convex hull of the support of $\mu$ and $\pch(\mu)$ denote the polynomial convex hull of the support of $\mu$, where the polynomial convex hull of a set $X$ is defined as in \cite{StaTo} by
\[
\pch(X)=\bigcap_{\textrm{polynomials $p\neq0$}}\left\{z:|p(z)|\leq\|p\|_{L^{\infty}(X)}\right\}.
\]
It is not difficult to see that if $\Omega$ is the unbounded component of $\barc\setminus\supp(\mu)$ then $\pch(\mu)=\barc\setminus\Omega$ (see \cite{StaTo}).

One is often interested in the behavior of the polynomials $p_n$ in various regions of the plane.  The most general results concern the behavior of $p_n$ in $\barc\setminus\pch(\mu)$, where one is often able to employ potential theoretic techniques (see for example \cite{StaTo,Wid2}).  Our main goal will be to show that the behavior of the polynomial $p_n(z;\mu)$ when $|z|$ is sufficiently large is determined only by its leading coefficient and the fact that it has $L^2(\mu)$-norm equal to $1$.  More precisely, in Theorem \ref{nis} below, we will show that any other polynomial of the same degree having approximately the same leading coefficient and approximately the same $L^2(\mu)$-norm has the same behavior when $|z|$ is sufficiently large.

The primary tool will be a non-linear characterization of the polynomials $\{p_n(z;\mu)\}_{n\geq0}$ originally proven by Saff in \cite{SaffConj}, whose proof proceeds as follows.  The orthogonality relation implies that if $\mbox{deg}(Q)\leq n$ then
\[
\int_{\bbC}\overline{p_n(w;\mu)}\frac{Q(z)-Q(w)}{z-w}d\mu(w)=0.
\]
This of course immediately shows that for $z\not\in\supp(\mu)$ one has
\begin{align}\label{interim}
Q(z)\int_{\bbC}\frac{\overline{p_n(w;\mu)}}{z-w}d\mu(w)=
\int_{\bbC}\frac{\overline{p_n(w;\mu)}Q(w)}{z-w}d\mu(w).
\end{align}
Setting $Q=p_n(\cdot;\mu)$ and dividing shows that for any polynomial $Q$ of degree at most $n$, we have
\begin{align}\label{king}
\frac{Q(z)}{p_n(z;\mu)}=\frac{\int_{\bbC}\frac{\overline{p_n(w;\mu)}Q(w)}{z-w}d\mu(w)}{\int_{\bbC}\frac{|p_n(w;\mu)|^2}{z-w}d\mu(w)},
\end{align}
whenever both denominators in (\ref{king}) are non-zero.

At first glance, the utility of (\ref{king}) is not obvious, though some applications are discussed in \cite{SaffConj}.  We will apply this formula in cases where $Q(z)=Q_n(z)$ is also $n$-dependent.  The key to our calculations will be to write the numerator on the right hand side of (\ref{king}) as a perturbation of the denominator and - under suitable hypotheses - show that the perturbation tends to zero as $\nri$ while the denominator does not.  In order to do so, we will require that the left hand side of (\ref{king}) tends to $1$ at infinity as $\nri$ and also that $Q_n(z)$ has $L^2(\mu)$-norm tending to $1$ as $\nri$.  Obviously $Q_n(z)=p_n(z;\mu)$ satisfies these conditions, but we will show that for any sequence $\{Q_n\}_{n\in\bbN}$ of polynomials satisfying these conditions, the left hand side of (\ref{king}) tends to $1$ as $\nri$ when $|z|$ is sufficiently large.

After proving the key result in the next section, we will apply it in the remaining sections.  We will apply it to prove several results, including a simplification of the conjecture in \cite{SaffConj} and some stability results for $p_n(z;\mu)$ under perturbations of the measure $\mu$.  Our most important results concern the ratio asymptotics of the orthonormal polynomials when the measure $\mu$ is \textit{regular} (see below) on the closed unit disk or a lemniscate.

One usually studies the asymptotics of $p_n(z;\mu)$ outside of $\ch(\mu)$ in one of three ways; the first is \textit{root asymptotics}:
\[
\lim_{\nri}|p_n(z;\mu)|^{1/n};
\]
the second is \textit{ratio asymptotics}:
\[
\lim_{\nri}\frac{p_n(z;\mu)}{p_{n-1}(z;\mu)};
\]
and the third is \textit{Szeg\H{o} asymptotics}:
\[
\lim_{\nri}\frac{p_n(z;\mu)}{\varphi(z)^n}\qquad,\qquad \varphi(z) \, \mbox{ analytic on } \, \bbC\setminus\ch(\mu).
\]
It is easy to see that the existence of the limit for Szeg\H{o} asymptotics implies the existence of the limit for ratio asymptotics, which in turn implies the existence of the limit for root asymptotics, and in general none of the converse statements hold.

A measure $\mu$ is called \textit{regular} if
\[
\lim_{\nri}\kappa_n^{-1/n}=\mbox{capacity}(\supp(\mu))
\]
(see \cite{Ransford} for an elementary discussion of capacity).  If $K$ is a compact set then a measure $\mu$ is said to be \textit{regular on $K$} if $\mu$ is regular, $\supp(\mu)\subseteq K$, and the boundary of  $\pch(K)$ is contained in $\supp(\mu)$.  Regularity is a necessary and sufficient condition for the existence of root asymptotics (see Theorem 3.1.1 in \cite{StaTo}).  Although ratio asymptotics need not hold for regular measures (see the example in section \ref{diskrat}), we can say something about the asymptotic behavior of $p_{n}/p_{n-1}$.  We prove that if the measure $\mu$ is regular on $\bard=\{z:|z|\leq1\}$, then the ratio $zp_{n-1}(z;\mu)/p_{n}(z;\mu)$ converges to $1$ uniformly on compact subsets of $\{z:|z|>1\}$ as $n$ tends to infinity through a subsequence of asymptotic density $1$.  Using the methods of \cite{RealRat}, one can show a similar result holds for compactly supported measures on $\bbR$ with essential support equal to $[-2,2]$.  We also show that if the measure $\mu$ is regular on the lemniscate $E_m=\{z:|z^m-1|\leq1\}$ then the ratio $(z^m-1)p_{n-m}(z;\mu)/p_{n}(z;\mu)$ converges to $1$ uniformly on compact subsets of $\barc\setminus\ch(\mu)$ as $n$ tends to infinity through a subsequence of asymptotic density $1$.  The advantage of working on a lemniscate is that there is a monic polynomial whose $L^{\infty}$-norm is $1$ on the lemniscate, while for more general supports this is not necessarily the case.  If this is not the case, then we cannot obtain convergence of $p_n/p_{n-1}$ by our methods, but we can describe the behavior of $p_n/p_{n-k_n}$ for a possibly unbounded sequence $\{k_n\}$ (see Section \ref{conn} for details).

The strength of our results is rooted in the weak assumptions we place on the measure $\mu$ in order to arrive at a ratio asymptotic result.  Many ratio asymptotic results arise as a consequence of Szeg\H{o} asymptotics (see \cite{SimBlob,Suetin}), which is a stronger conclusion than ratio asymptotics and hence requires stronger hypotheses on the measure.  In \cite{SaffConj}, Saff places bounds on $|p_n/p_{n-1}|$ for arbitrary compactly supported measures using methods similar to ours.  The results in \cite{Duran} concern orthogonal polynomials on the real line and are in the same spirit as our Theorem \ref{nis}, though Theorem \ref{nis} is much more general.

In addition to studying ratio asymptotics for consecutive orthonormal polynomials, we will also consider ratios of orthonormal polynomials corresponding to different but related measures.  In particular, we will study the \textit{Uvarov transform}, which is obtained by adding a point mass to the measure $\mu$:
\[
\mu_x=\mu+t\delta_x,\qquad t>0\,;
\]
and the \textit{Christoffel Transform}, which is obtained by multiplying $\mu$ by the square modulus of a monomial:
\[
d\nu^x(z)=|z-x|^2d\mu(z).
\]
In both cases, we show that the asymptotic behavior of the orthonormal polynomials outside of $\ch(\mu)$ is unchanged provided the pure point (for the Uvarov Transform) or the zero of the monomial (for the Christoffel Transform) satisfies the condition
\begin{align}\label{nevlike}
\lim_{\nri}\frac{|p_n(x;\mu)|^2}{\sum_{j=0}^{n-1}|p_j(x;\mu)|^2}=0
\end{align}
(see (\ref{nevailike}) in Section \ref{stabl}).  The condition of regularity is equivalent to
\[
\limsup_{\nri}|p_n(z;\mu)|^{1/n}=1
\]
for every $z$ in the outer boundary of the support of $\mu$, except perhaps on a set of capacity $0$ (see Theorem 3.1.1 in \cite{StaTo}).  Therefore, condition (\ref{nevlike}) - when applied to a point $x$ in the outer boundary of $\supp(\mu)$ - qualitatively tells us that $x$ is not a point at which $|p_n(x;\mu)|$ grows exponentially (see also Theorem 1.3 in \cite{BLS}).

After proving the key fact about ratios of polynomials in the next section, we will apply it in the case when the orthonormal polynomials correspond to a measure supported on the closed unit disk in Section \ref{dsk}.  We also include a brief digression where we show that if $\mu$ is any regular measure on $\bard$ then there is a subsequence $\mcn\subseteq\bbN$ of asymptotic density $1$ so that the probability measures $\{|p_n(z;\mu)|^2d\mu(z)\}_{n\geq0}$ converge weakly to normalized arc-length measure on the unit circle as $\nri$ through $\mcn$.  In Section \ref{conn}, we will apply the results of Section \ref{key} to orthonormal polynomials whose measure of orthogonality has a more general support.  The main theorem in Section \ref{conn} is analogous to results in Section \ref{dsk}, but requires a small sacrifice in the strength of the conclusion due to the added generality.  Finally, in Section \ref{stabl}, we will apply the results of Section \ref{key} to prove our stability results concerning orthonormal polynomials when the measure is perturbed in specific ways.  The foundation for all that follows is Theorem \ref{nis} in the next section.

\vspace{2mm}

\noindent\textbf{Acknowledgements.}  It is a pleasure to thank my advisor Barry Simon for his guidance during this research and Vilmos Totik for useful feedback on an early draft of this work.  Many thanks are also due to the anonymous referees for many suggestions about the content and exposition of this paper.  This material is based upon work supported by the National Science Foundation Graduate Research Fellowship under Grant No. DGE-1144469.

\section{The Key Fact}\label{key}

In this section, we will prove the crucial property mentioned in the introduction, which we will apply in later sections.  Before we prove our main result of this section, we make the following simple calculation:

\begin{lemma}\label{below}
Let $\mu$ be a measure with compact support $\supp(\mu)\subseteq\bbC$ and suppose $z$ satisfies $z\nch(\mu)$.  There is a constant $A_z>0$ so that
\[
\left|\int_{\bbC}\frac{|p_n(w;\mu)|^2}{z-w}d\mu(w)\right|>A_z
\]
for every $n\in\bbN$.  Furthermore, the constant $A_z$ may be bounded uniformly from below on any compact subset of $\bbC\setminus\ch(\mu)$.
\end{lemma}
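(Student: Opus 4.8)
The plan is to recognize the integral as the Cauchy transform of the probability measure $d\nu_n(w)=|p_n(w;\mu)|^2\,d\mu(w)$ evaluated at $z$. Since $\|p_n(\cdot;\mu)\|_{L^2(\mu)}=1$, each $\nu_n$ is a probability measure, and all of them are supported inside the single fixed compact convex set $K:=\ch(\mu)$. This is precisely the source of the uniformity in $n$: any lower bound I extract will depend only on the total mass (which is always $1$) and on the geometry of $K$ relative to $z$, never on the individual polynomial $p_n$.

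First I would use the hypothesis $z\nch(\mu)$ geometrically. Let $w_0$ be the metric projection of $z$ onto the convex set $K$ and set $u:=z-w_0$, so that $d:=|u|=\textrm{dist}(z,K)>0$. The variational characterization of projection onto a convex set gives $\textrm{Re}[\,u(\overline{w}-\overline{w_0})\,]\le 0$ for every $w\in K$, which is just the statement that $K$ lies in the supporting half-plane through $w_0$ with outward normal $u$. The main computational step is then the pointwise estimate, valid for all $w\in K$,
\[
\textrm{Re}\!\left[\frac{u}{z-w}\right]=\frac{|u|^2-\textrm{Re}[\,u(\overline{w}-\overline{w_0})\,]}{|z-w|^2}\ge\frac{d^2}{|z-w|^2}\ge\frac{d^2}{R^2},
\]
where $R:=d+\textrm{diam}(K)\ge\sup_{w\in K}|z-w|$ bounds the denominator from above.

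Once this sign is pinned down the rest is routine: integrating against $\nu_n$ and using $\supp(\nu_n)\subseteq\supp(\mu)\subseteq K$ gives $\textrm{Re}[\,u\int(z-w)^{-1}d\nu_n(w)\,]\ge d^2/R^2$, whence $|\int(z-w)^{-1}d\nu_n(w)|\ge d/R^2$ since $|u|=d$. This yields the explicit constant $A_z=d/R^2=\textrm{dist}(z,K)/(\textrm{dist}(z,K)+\textrm{diam}(K))^2>0$, independent of $n$. For the final clause, on a compact set $L\subseteq\bbC\setminus\ch(\mu)$ the quantity $\textrm{dist}(z,K)$ is bounded below by $\textrm{dist}(L,K)>0$ while $\textrm{dist}(z,K)+\textrm{diam}(K)$ stays bounded above, so both inequalities push in the same direction and $A_z$ is bounded below by a single positive constant throughout $L$.

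The one genuinely non-obvious move is choosing the correct complex multiplier $u=z-w_0$ before taking real parts: without it the Cauchy transform can a priori have either sign and no naive modulus bound is uniform in $n$. Everything else, namely the projection inequality and the mass normalization $\nu_n(\bbC)=1$, is standard, so I do not anticipate a serious obstacle once this rotation trick is in place.
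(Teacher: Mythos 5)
Your proposal is correct and follows essentially the same route as the paper: both arguments multiply the Cauchy transform of the probability measure $|p_n(w;\mu)|^2\,d\mu(w)$ by a unit rotation aligned with the direction from $\ch(\mu)$ to $z$ (the paper's $e^{-i\theta}$ is exactly your $u/|u|$, justified there by a supporting half-plane rather than your metric-projection inequality), then bound the real part below by $\textrm{dist}(z,\ch(\mu))/\sup_{w}|z-w|^2$ using the unit total mass. Your explicit constant $\textrm{dist}(z,K)/(\textrm{dist}(z,K)+\textrm{diam}(K))^2$ and the uniformity argument on compact sets match the paper's conclusion, so there is nothing to fix.
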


\begin{proof}
Since $z\nch(\mu)$, we can find a $\theta\in\bbR$ so that $\min_{w\in\ch(\mu)}\Real[\eitheta z-\eitheta w]=\textrm{dist}(z,\ch(\mu))$.  Therefore
\begin{align*}
\Real\left[e^{-i\theta}\int_{\bbC}\frac{|p_n(w;\mu)|^2}{z-w}d\mu(w)\right]&=
\int_{\bbC}\frac{|p_n(w;\mu)|^2}{|z-w|^2}\Real[e^{-i\theta}\bar{z}-e^{-i\theta}\overline{w}]d\mu(w)\\
&\geq
\frac{\textrm{dist}(z,\ch(\mu))}{\sup_{w\in\supp(\mu)}|z-w|^2}
\end{align*}
as desired.  The uniformity in $A_z$ is now obvious.
\end{proof}

Lemma \ref{below} assures us that the integral on the left hand side of (\ref{interim}) is non-zero when $z\nch(\mu)$, for if it were zero then the right hand side of (\ref{interim}) would also vanish for \textit{every} choice of $Q$ and we have just shown that it cannot vanish for $Q=p_n(z;\mu)$.  Therefore, the division step in the derivation of (\ref{king}) is justified whenever $z\not\in\ch(\mu)$.  Furthermore, we have demonstrated that the denominator on the right hand side in (\ref{king}) is non-zero for appropriate $z$.

The following theorem will be used heavily for the applications in the remainder of this paper.  It tells us that the behavior of the orthonormal polynomials when $|z|$ is large is determined only by its normalization and its leading coefficient.

\begin{theorem}\label{nis}
Suppose $\mu$ is a (finite) and compactly supported measure on $\bbC$.  For each $n\in\bbN$, choose a polynomial $Q_n$ of degree exactly $n$ and leading coefficient $\tau_n$ so that the following properties hold:
\begin{enumerate}
\item\label{norm} $\lim_{\nri}\|Q_n\|_{L^2(\mu)}=1$,
\item\label{leading} $\lim_{\nri}\tau_n/\kappa_n=1$.
\end{enumerate}
Then
\begin{align}\label{keyconc}
\lim_{\nri}\frac{Q_n(z)}{p_n(z;\mu)}=1
\end{align}
for all $z\nch(\mu)$.  Furthermore, the convergence is uniform on compact subsets of $\barc\setminus\ch(\mu)$.
\end{theorem}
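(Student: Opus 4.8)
The plan is to insert $Q = Q_n$ into the identity (\ref{king}), which is licensed for $z \nch(\mu)$ by Lemma \ref{below} (both denominators are nonzero there), and then to exploit the structure anticipated in the introduction: writing the numerator of (\ref{king}) as the denominator plus a small perturbation. Concretely, using $\overline{p_n(w;\mu)}Q_n(w) = |p_n(w;\mu)|^2 + \overline{p_n(w;\mu)}\bigl(Q_n(w) - p_n(w;\mu)\bigr)$ inside the numerator integral, I would rewrite (\ref{king}) as
\[
\frac{Q_n(z)}{p_n(z;\mu)} = 1 + \frac{\int_{\bbC}\frac{\overline{p_n(w;\mu)}(Q_n(w)-p_n(w;\mu))}{z-w}\,d\mu(w)}{\int_{\bbC}\frac{|p_n(w;\mu)|^2}{z-w}\,d\mu(w)}.
\]
It then suffices to show that the second term tends to $0$ as $\nri$, uniformly on compact subsets of $\barc \setminus \ch(\mu)$.

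The denominator is exactly the integral estimated from below in Lemma \ref{below}, so it is bounded away from $0$ with the stated uniformity. For the numerator, since $z \nch(\mu)$ and $\supp(\mu) \subseteq \ch(\mu)$, one has $|z - w| \geq \textrm{dist}(z, \ch(\mu))$ for all $w \in \supp(\mu)$, and the Cauchy--Schwarz inequality together with $\|p_n(\cdot;\mu)\|_{\ltm} = 1$ gives
\[
\left|\int_{\bbC}\frac{\overline{p_n(w;\mu)}(Q_n(w)-p_n(w;\mu))}{z-w}\,d\mu(w)\right| \leq \frac{\|Q_n - p_n(\cdot;\mu)\|_{\ltm}}{\textrm{dist}(z,\ch(\mu))}.
\]
Thus the whole problem reduces to proving $\|Q_n - p_n(\cdot;\mu)\|_{\ltm} \to 0$ as $\nri$.

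This $L^2$-estimate is the heart of the argument, and it is where both hypotheses enter. Expanding $Q_n$ in the orthonormal basis $\{p_j(\cdot;\mu)\}_{j=0}^{n}$, I would write $Q_n = \sum_{j=0}^{n} c_j^{(n)} p_j(\cdot;\mu)$ with $c_j^{(n)} = \int \overline{p_j(\cdot;\mu)}\, Q_n \, d\mu$. Comparing leading coefficients shows $c_n^{(n)} = \tau_n/\kappa_n$, which tends to $1$ by hypothesis (\ref{leading}); meanwhile $\|Q_n\|_{\ltm}^2 = \sum_{j=0}^{n} |c_j^{(n)}|^2 \to 1$ by hypothesis (\ref{norm}). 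Since $|c_n^{(n)}|^2 \to 1$ as well, we obtain $\sum_{j=0}^{n-1} |c_j^{(n)}|^2 \to 0$, and therefore
\[
\|Q_n - p_n(\cdot;\mu)\|_{\ltm}^2 = |c_n^{(n)} - 1|^2 + \sum_{j=0}^{n-1}|c_j^{(n)}|^2 \longrightarrow 0 .
\]

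Assembling the pieces, the two displayed bounds yield $\bigl|\frac{Q_n(z)}{p_n(z;\mu)} - 1\bigr| \leq \frac{\|Q_n - p_n(\cdot;\mu)\|_{\ltm}}{A_z\,\textrm{dist}(z,\ch(\mu))}$, which tends to $0$ pointwise for each $z \nch(\mu)$. For uniformity on a compact set $K \subseteq \barc \setminus \ch(\mu)$, note that on the finite part of $K$ both $\textrm{dist}(z,\ch(\mu))$ and $A_z$ are bounded below by Lemma \ref{below}. The only genuinely delicate point, which I expect to be the main obstacle, is a neighborhood of $z = \infty$: there both $A_z$ and the numerator bound decay like $|z|^{-1}$, so one cannot simply cite a uniform lower bound on $A_z$. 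Using that the proof of Lemma \ref{below} lets one take $A_z = \textrm{dist}(z,\ch(\mu))/\sup_{w}|z-w|^2$, one checks that the relevant product satisfies $A_z\,\textrm{dist}(z,\ch(\mu)) = \textrm{dist}(z,\ch(\mu))^2/\sup_{w}|z-w|^2 \to 1$ as $|z| \to \infty$, hence stays bounded away from $0$ on all of $K$. Consequently the factor multiplying $\|Q_n - p_n(\cdot;\mu)\|_{\ltm} \to 0$ is uniformly bounded on $K$, which gives the asserted uniform convergence and completes the argument.
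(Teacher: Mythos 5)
Your proof is correct and follows essentially the same route as the paper's: the same insertion of $Q_n$ into (\ref{king}), the same decomposition of the numerator as ``denominator plus perturbation,'' the same use of Lemma \ref{below} and Cauchy--Schwarz, and the same reduction to showing $\|Q_n-p_n(\cdot;\mu)\|_{\ltm}\to0$. Your Parseval-type argument for that last step (expanding $Q_n$ in the basis $\{p_j(\cdot;\mu)\}_{j=0}^n$ and noting $c_n^{(n)}=\tau_n/\kappa_n$) is equivalent to the paper's computation, which expands $\|Q_n-p_n(\cdot;\mu)\|^2_{\ltm}$ and evaluates $\langle Q_n,p_n(\cdot;\mu)\rangle_{\mu}=\tau_n/\kappa_n$ by orthogonality. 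The one place you genuinely diverge is uniformity near $z=\infty$: the paper gets this softly from the maximum modulus principle (uniform convergence on compact subsets of $\bbC\setminus\ch(\mu)$ plus analyticity of $Q_n/p_n$ on $\barc\setminus\ch(\mu)$, the zeros of $p_n$ lying in $\ch(\mu)$), whereas you argue quantitatively that $A_z\,\textrm{dist}(z,\ch(\mu))=\textrm{dist}(z,\ch(\mu))^2/\sup_{w\in\supp(\mu)}|z-w|^2\to1$ as $|z|\to\infty$, handling the point $z=\infty$ itself via hypothesis (\ref{leading}). Both treatments are valid; yours has the small advantage of an explicit error bound in terms of $\|Q_n-p_n(\cdot;\mu)\|_{\ltm}$, while the paper's is shorter.
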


\vspace{2mm}

\noindent\textit{Remark 1.}  The proof will show that we get the same conclusion if we only define $Q_n$ for $n$ in some subsequence and then send $\nri$ through that subsequence.

\vspace{2mm}

\noindent\textit{Remark 2.}  By evaluating $Q_n(\cdot)/p_n(\cdot;\mu)$ at infinity, we see that the second condition in Theorem \ref{nis} is necessary for (\ref{keyconc}) to hold.  Additionally, since $\kappa_n^{-1}=\|\Phi_n(\cdot;\mu)\|_{\ltm}$, the second condition and the extremal property imply
\[
\liminf_{\nri}\|Q_n\|_{\ltm}\geq\liminf_{\nri}\tau_n\|\Phi_n(\cdot;\mu)\|_{\ltm}=1,
\]
so the first condition of Theorem \ref{nis} is really a statement about the $\limsup$.

\vspace{2mm}

\noindent\textit{Remark 3.}  We will show by means of an example in Section \ref{chr} that we cannot extend the conclusion of Theorem \ref{nis} to include the boundary of $\pch(\mu)$.  However, we will be able to say something about what happens at points $z$ that are outside $\pch(\mu)$, but inside the convex hull of the support of $\mu$ (see the end of Section \ref{dsk}).

\vspace{2mm}

\begin{proof}
Fix $z\nch(\mu)$.
By (\ref{king}), we have
\begin{align}
\nonumber\frac{Q_n(z)}{p_n(z;\mu)}&=
\frac{\int_{\bbC}\frac{\overline{p_n(w;\mu)}Q_n(w)}{z-w}d\mu(w)}
{\int_{\bbC}\frac{|p_n(w;\mu)|^2}{z-w}d\mu(w)}\\
&\label{split}=\frac{\int_{\bbC}\frac{|p_n(w;\mu)|^2}{z-w}d\mu(w)+\int_{\bbC}\frac{\overline{p_n(w;\mu)}(Q_n(w)-p_n(w;\mu))}{z-w}d\mu(w)}{\int_{\bbC}\frac{|p_n(w;\mu)|^2}{z-w}d\mu(w)}.
\end{align}
By Lemma \ref{below}, the denominator and the matching term in the numerator in (\ref{split}) stay away from $0$, so we need only show the second term in the numerator goes to $0$ as $\nri$.  For this, we apply the Schwarz inequality to see that
\begin{align}\label{beteq}
\left|{\int_{\bbC}\frac{\overline{p_n(w;\mu)}(Q_n(w)-p_n(w;\mu))}{z-w}d\mu(w)}\right|^2\leq \frac{\|Q_n(w)-p_n(w;\mu)\|^2_{L^2(\mu)}}{\inf_{w\in\ch(\mu)}|z-w|^2}.
\end{align}
The norm can be expanded as
\[
\|p_n(\cdot;\mu)\|^2_{L^2(\mu)}+\|Q_n\|^2_{L^2(\mu)}
-2\Real\left[\langle Q_n(w),p_n(w;\mu)\rangle_{\mu}\right].
\]
Our first hypothesis on $Q_n$ implies that the sum of the first two terms tends to $2$ as $\nri$.  By the orthogonality relation, we may replace $Q_n(w)$ in the inner product by $\tau_n\kappa_n^{-1}p_n(w;\mu)$.  We now apply the second hypothesis on $Q_n$ and arrive at (\ref{keyconc}).

To prove the statement concerning uniformity, notice that Lemma \ref{below} proves that convergence holds uniformly on compact subsets of $\bbC\setminus\ch(\mu)$ so by the maximum modulus principle, we get uniformity on any closed set in $\barc\setminus\ch(\mu)$, even those that include infinity.
\end{proof}

In the remaining sections, we will see how one can apply Theorem \ref{nis}.

\section{Application:  Measures Supported on the Unit Disk}\label{dsk}

Now we will present some applications of Theorem \ref{nis} to measures supported on the closed unit disk.  We will pay special attention to ratio asymptotics of the orthonormal polynomials.  Ratio asymptotics for orthonormal polynomials on the unit circle or an interval have been studied extensively (see for example \cite{DenType,RatoArc,Duran,NevaiOP,Rakh1,Rakh2,OPUC1,OPUC2,ToRat}).  We will focus on regular measures on the closed unit disk $\bard$ and restrict ourselves to finding a subsequence along which we have the desired behavior.  In Section \ref{wkmsr}, we will examine the behavior of the measures $\{|p_n(z;\mu)|^2d\mu(z)\}_{n\geq0}$ when $\mu$ is regular on $\bard$.

Before we proceed with the statement and proof of our results, we state the following technical lemma.  We recall that for a set of natural numbers $\mcn\subseteq\bbN$, we define its \textit{asymptotic density} as
\[
\lim_{\nri}\frac{\#\{\mcn\cap\{1,2,\ldots,n\}\}}{n}
\]
provided this limit exists.

\begin{lemma}\label{anotherone}
Let $\mcn\subseteq\bbN$ be a subsequence with asymptotic density $1$.  There exists a subsequence $\mcn_1\subseteq\mcn$ also of asymptotic density $1$ so that if $\ell\in\bbZ$ is fixed then every sufficiently large $m\in\mcn_1$ can be written as $q+\ell$ for some $q\in\mcn$.
\end{lemma}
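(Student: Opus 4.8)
The plan is to pass from $\mcn$ to its complement $\mcn^c:=\bbN\setminus\mcn$, which has asymptotic density $0$, and to build $\mcn_1$ by discarding from $\mcn$ only those points that sit too close to $\mcn^c$. Concretely, set $f(n):=\#(\mcn^c\cap\{1,\dots,n\})$, so that the density hypothesis on $\mcn$ reads $f(n)/n\to0$ as $\nri$. I would fix a nondecreasing function $g:\bbN\to\bbN$ with $g(n)\to\infty$, to be calibrated at the very end, and define
\[
\mcn_1:=\bigl\{\, m\in\mcn : \{m-g(m),\,m-g(m)+1,\dots,m+g(m)\}\cap\bbN\subseteq\mcn \,\bigr\}.
\]
In words, I keep exactly those $m\in\mcn$ that are centered in a symmetric run of elements of $\mcn$ of radius $g(m)$. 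By construction $\mcn_1\subseteq\mcn$.

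The shift property is then immediate and needs no control on how fast $g$ grows. Fix $\ell\in\bbZ$. Since $g(m)\to\infty$ there is a threshold $M_\ell$ (with $M_\ell\ge|\ell|+1$) such that $g(m)\ge|\ell|$ for all $m\ge M_\ell$. For $m\in\mcn_1$ with $m\ge M_\ell$ the integer $m-\ell$ lies in the window $\{m-g(m),\dots,m+g(m)\}$ and is at least $1$; here I use crucially that $\ell\in\bbZ$, so a symmetric window covers both signs of $\ell$. Hence $m-\ell\in\mcn$, i.e.\ $m=(m-\ell)+\ell$ with $m-\ell\in\mcn$. This is exactly the asserted conclusion, the ``sufficiently large'' threshold $M_\ell$ depending only on $\ell$.

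It remains to check that $\mcn_1$ still has density $1$, equivalently that $\mcn\setminus\mcn_1$ has density $0$; this is where $g$ must be made to grow slowly. A discarded $m$ is one whose radius-$g(m)$ window meets $\mcn^c$, so for $m\le n$ there is $k\in\mcn^c$ with $|m-k|\le g(m)\le g(n)$ and $k\le n+g(n)$. Since each such $k$ is responsible for at most $2g(n)+1$ values of $m$, we get
\[
\#\bigl((\mcn\setminus\mcn_1)\cap\{1,\dots,n\}\bigr)\le (2g(n)+1)\,f(n+g(n))\le (2g(n)+1)\bigl(f(n)+g(n)\bigr),
\]
using $f(n+g(n))\le f(n)+g(n)$. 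Dividing by $n$, it suffices to arrange $g(n)^2=o(n)$ and $g(n)f(n)/n\to0$ while keeping $g(n)\to\infty$; this is a standard calibration, possible precisely because $f(n)/n\to0$. For instance (the case $\mcn^c$ finite being trivial) take $g(n)=\min\{\lfloor n^{1/3}\rfloor,\ \lfloor 1/\sqrt{h(n)}\rfloor\}$ with $h(n):=\sup_{k\ge n}f(k)/k$, which is nonincreasing with $h(n)\to0$; then $g(n)\to\infty$, $g(n)^2\le n^{2/3}=o(n)$, and $g(n)f(n)/n\le g(n)h(n)\le\sqrt{h(n)}\to0$. This gives the density-$0$ bound and completes the construction.

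The only genuine issue is the tension between the two demands on $g$: the shift property forces $g(n)\to\infty$, while preserving density $1$ forces $g$ to grow slowly enough that widening every window by $g(n)$ does not inflate the deleted set past density $0$. The slack that reconciles them is supplied exactly by the hypothesis that $\mcn^c$ has density $0$ (not merely that it is infinite and thin), which is what makes the calibration of $g$ go through; I expect this balancing of the two rates to be the main point to get right.
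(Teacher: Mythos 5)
Your proof is correct and is essentially the paper's own argument in dual form: the paper fattens each point of the complement $\bbN\setminus\mcn$ by a slowly growing, unbounded radius $k_m$ and deletes the resulting union from $\bbN$, whereas you keep exactly those $m\in\mcn$ whose radius-$g(m)$ window avoids the complement, and the calibration of the radius against the density-zero hypothesis is the same balancing act in both. If anything, your counting (the bound $f(n+g(n))\le f(n)+g(n)$ and the explicit choice of $g$) is more careful than the paper's, which silently ignores windows centered at complement points beyond $n$ that could still reach into $\{1,\dots,n\}$.
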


\noindent\textit{Remark.}  An equivalent condition on $\mcn_1$ in the statement of the lemma is that if $\ell\in\bbZ$ is fixed then for all sufficiently large $m\in\mcn_1$, the set $\{m-|\ell|,m-|\ell|+1,\ldots,m+|\ell|\}$ is contained in $\mcn$.

\begin{proof}
The idea is to think of the set $\bbN\setminus\mcn$ as being gaps in the set $\mcn$ and then to widen the gaps in smart way.  More precisely, let $\mcm=\bbN\setminus\mcn$ and let $[n]=\{1,2,\ldots,n\}$.  If $k\in\bbN$ is fixed, then by definition of asymptotic density, one has
\[
\lim_{\nri}\frac{k|\mcm\cap[n]|}{n}=0,
\]
where $|X|$ denotes the cardinality of the set $X$.  Therefore, by a standard argument, we can find a sequence of natural numbers $\{k_n\}_{n=1}^{\infty}$, which is non-decreasing and is unbounded so that
\[
\lim_{\nri}\frac{k_n|\mcm\cap[n]|}{n}=0
\]

For every $m\in\mcm$, let $U_m=\{m-(k_m-1),\ldots,m,\ldots,m+k_{m}-1\}$ and define
\[
\widetilde{\mcm}=\bigcup_{m\in\mcm}U_m.
\]
Then
\[
\limsup_{\nri}\frac{|\widetilde{\mcm}\cap[n]|}{n}\leq\limsup_{\nri}\frac{2k_n|\mcm\cap[n]|}{n}=0,
\]
so $\bbN\setminus\widetilde{\mcm}$ has density $1$.  Define $\mcn_1=\bbN\setminus\widetilde{\mcm}$and let $\ell\in\bbZ$ be fixed.  Clearly $\widetilde{\mcm}$ is divided into blocks so that the first and last $|\ell|$ elements of any sufficiently large block are not in $\mcm$.  In other words, if we shift every block of $\mcn_1$ to the left or right by $|\ell|$, all but finitely many blocks land in $\mcn$, which is the desired conclusion.
\end{proof}

Lemma \ref{anotherone} easily allows us to establish two relevant results for measures supported on the real line.  The first of these involves ratio asymptotics and can be stated as follows:

\begin{theorem}\label{realc}
Let $\mu$ be a measure supported on some compact subset of the real line.  Assume further that $\mu$ is regular and has essential support equal to $[-2,2]$.  Define $\varphi(w)=\frac{1}{2}(w+\sqrt{w^2-4})$, which is the conformal map from the complement of $[-2,2]$ to the exterior of the closed unit disk.  There exists a subsequence $\mcn\subseteq\bbN$ of asymptotic density $1$ so that
\begin{align}\label{rlrt}
\lim_{{\nri}\atop{n\in\mcn}}\frac{\varphi(z)p_{n-1}(z;\mu)}{p_n(z;\mu)}=1
\end{align}
for all $z\not\in\supp(\mu)$.
\end{theorem}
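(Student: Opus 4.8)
The plan is to transport the problem to the unit circle by means of the classical Szeg\H{o} correspondence, to invoke there the ratio-asymptotic result for regular measures on $\bard$ (which is itself an application of Theorem \ref{nis}), and to use Lemma \ref{anotherone} to manufacture the consecutive indices that the transfer requires. The guiding observation is that under the substitution $x=z+z^{-1}$, which maps $\{|z|=1\}$ two-to-one onto $[-2,2]$ and satisfies $\varphi(x)=z$ for $|z|>1$, a single step $n-1\mapsto n$ on the interval corresponds to a \emph{double} step $2n-2\mapsto 2n$ on the circle; this is precisely why the conformal factor on the interval is $\varphi$ rather than a polynomial.

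First I would set up the correspondence. Pulling $\mu$ back through $x=z+z^{-1}$ produces a measure $\nu$ on the unit circle, invariant under $z\mapsto\bar z$; since $\mathrm{cap}([-2,2])=\mathrm{cap}(\{|z|=1\})=1$ and the essential support of $\mu$ is all of $[-2,2]$, the measure $\nu$ is regular on $\bard$ with essential support equal to the full circle. Writing $\Psi_m,\Psi_m^{*}$ for the monic and reversed monic polynomials of $\nu$ and $\{\alpha_m\}$ for its Verblunsky coefficients, I would record the Szeg\H{o} formula $p_n(x;\mu)\propto z^{-n}\bigl(\Psi_{2n}(z)+\Psi_{2n}^{*}(z)\bigr)$ for $x=z+z^{-1}$. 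Two regularity inputs then feed the computation: the ratio result for $\nu$ on $\bard$ (obtained by applying Theorem \ref{nis}) furnishes a density-$1$ set $G\subseteq\bbN$ along which the monic ratio $\Psi_m/\Psi_{m-1}\to z$ uniformly on compacts of $\{|z|>1\}$; and regularity forces $\frac1n\sum_{k<n}(-\log(1-|\alpha_k|^2))\to0$, so that $\alpha_m\to0$ along a density-$1$ set. Since $\rho_m:=\Psi_m^{*}/\Psi_m$ satisfies $\rho_{m+1}=(\rho_m-\alpha_m z)/(z-\overline{\alpha_m}\rho_m)$ and $|\rho_m|\le1$ on $\{|z|\ge1\}$, the vanishing of $\alpha_m$ makes this recursion contractive for $|z|>1$, whence $\rho_m\to0$ along the good indices; that is, the reversed terms $\Psi_m^{*}$ are asymptotically negligible.

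Next I would synchronize the indices. Applying Lemma \ref{anotherone} to $G$ (and to the density-$1$ set on which $\alpha_m\to0$) produces a subsequence whose blocks are stable under the shifts $\ell\in\{-2,-1,0\}$; intersecting with the even integers and relabelling yields a density-$1$ set $\mcn\subseteq\bbN$ of interval-indices for which the circle-indices $2n-2,\,2n-1,\,2n$ all lie in $G$ and carry small Verblunsky coefficients. For $n\in\mcn$ and $x=z+z^{-1}$ with $|z|>1$ the Szeg\H{o} formula gives
\[
\frac{\varphi(x)\,p_{n-1}(x;\mu)}{p_{n}(x;\mu)}
= C_{n}\,\frac{1+\rho_{2n-2}(z)}{1+\rho_{2n}(z)}\,\cdot\,z^{2}\,\frac{\Psi_{2n-2}(z)}{\Psi_{2n}(z)},
\]
where $C_n>0$ is the Szeg\H{o} normalization. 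Here $z^{2}\,\Psi_{2n-2}/\Psi_{2n}\to1$ because $2n,2n-1\in G$, the factor $(1+\rho_{2n-2})/(1+\rho_{2n})\to1$ because $\rho_{2n-2},\rho_{2n}\to0$, and $C_n\to1$ because it depends only on Verblunsky coefficients that tend to $0$; hence the left-hand side tends to $1$ uniformly on compacts of $\{|z|>1\}$, i.e. on compacts of $\bbC\setminus[-2,2]$. To pass from $x\notin[-2,2]$ to every $x\notin\supp(\mu)$ (the support may carry mass points outside $[-2,2]$), I would note that $\varphi\,p_{n-1}/p_n$ is analytic and, along $\mcn$, uniformly bounded off $\supp(\mu)$, and invoke a normal-families argument to propagate the limit $1$ across the isolated mass points.

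The step I expect to be the main obstacle is the last round of bookkeeping in the display: under the mere hypothesis of regularity the reversed polynomials $\Psi_m^{*}$ and the normalization $C_n$ are controlled only in the density sense, so one must verify carefully that the synchronized subsequence delivered by Lemma \ref{anotherone} simultaneously makes $\Psi_m/\Psi_{m-1}\to z$, $\rho_m\to0$, and $C_n\to1$ along the \emph{same} indices $\mcn$, and that the two-to-one nature of $x=z+z^{-1}$ together with the exceptional mass points does not disturb the uniform convergence. This is exactly the technical content of the methods of \cite{RealRat}; the conceptual novelty that makes it go through is Lemma \ref{anotherone}, which converts the single-index density-$1$ statements on the circle into the consecutive-index statements needed on the interval.
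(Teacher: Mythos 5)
Your reduction to the unit circle has a genuine gap at its very first step. The hypothesis of Theorem \ref{realc} is that the \emph{essential} support of $\mu$ equals $[-2,2]$, not that $\supp(\mu)\subseteq[-2,2]$; the measure may carry isolated mass points outside $[-2,2]$, which is precisely why the conclusion is stated for $z\not\in\supp(\mu)$ rather than $z\not\in[-2,2]$. For such a measure the Szeg\H{o} correspondence you invoke does not exist: the map $x=z+z^{-1}$ sets up a bijection between conjugation-invariant measures on $\partial\bbD$ and measures supported \emph{inside} $[-2,2]$, so there is no circle measure $\nu$ whose polynomials satisfy $p_n(x;\mu)\propto z^{-n}\bigl(\Psi_{2n}(z)+\Psi_{2n}^{*}(z)\bigr)$ for the given $\mu$. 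Pulling back only recovers $\mu_0:=\mu\restriction_{[-2,2]}$, whose orthonormal polynomials are genuinely different from those of $\mu$, and this discrepancy cannot be waved away: by the paper's own Uvarov analysis (Section \ref{uvar}) together with Corollary \ref{nogrow}, a mass point $x\not\in\ch(\mu_0)$ never satisfies (\ref{nevailike}), and then (\ref{normrat}) shows $\kappa_n(\mu)/\kappa_n(\mu_0)$ stays bounded away from $1$ (for regular $\mu_0$ the quantity in (\ref{nevailike}) has positive liminf, since $|p_n(x;\mu_0)|$ grows like $|\varphi(x)|^n$). So relative asymptotics between $\mu$ and $\mu_0$ fails along \emph{every} subsequence, and a ratio statement proved for $\mu_0$ via OPUC does not transfer to $\mu$. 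Your proposed normal-families patch addresses only uniformity of an already-established convergence near isolated points; it cannot supply the missing correspondence. As written, your argument proves the theorem only under the additional hypothesis $\supp(\mu)\subseteq[-2,2]$.

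Under that extra hypothesis your route is essentially sound and genuinely different from the paper's: regularity of $\nu$, density-one smallness of the Verblunsky coefficients, the contraction of $\rho_m=\Psi_m^{*}/\Psi_m$ for $|z|>1$, the synchronization of indices via Lemma \ref{anotherone}, and the displayed Szeg\H{o} identity are all correct. The paper instead never leaves the real line: it writes the three-term recursion $xp_n=a_{n+1}p_{n+1}+b_{n+1}p_n+a_np_{n-1}$, invokes Theorem 1.1 of \cite{CNClass} --- which for regular measures with essential support $[-2,2]$ gives Ces\`{a}ro convergence of the Jacobi parameters \emph{regardless} of mass points outside $[-2,2]$ --- combines it with Lemma \ref{anotherone} to get $a_{n+m}\to1$, $b_{n+m}\to0$ for every fixed $m\in\bbZ$ along a single density-one set, and then runs the transfer-matrix argument of the second proof of Theorem 2.1 in \cite{RealRat}, which yields (\ref{rlrt}) at every $z\not\in\supp(\mu)$, eigenvalues included. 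That is the feature your approach is missing; to repair it you would either have to assume $\supp(\mu)\subseteq[-2,2]$ or add a separate argument controlling the effect of the outside mass points on the ratios $p_n/p_{n-1}$, and the natural bridge via (\ref{exone})/(\ref{nevailike}) is provably unavailable.
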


The second result concerns the weak limits of the measures $\{|p_n(z;\mu)|^2d\mu(z)\}_{n\in\bbN}$.  It can be stated as follows:

\begin{theorem}\label{realiff}
Let $\mu$ be a measure supported on some compact subset of the real line.  Assume further that $\mu$ is regular and has essential support equal to $[-2,2]$.  There exists a subsequence $\mcn\subseteq\bbN$ of asymptotic density $1$ so that
\begin{align}\label{realweak}
\textrm{w-}\lim_{{\nri}\atop{n\in\mcn}}|p_n(x;\mu)|^2d\mu(x)=d\omega(x)
\end{align}
where $\omega$ is the equilibrium measure for the interval $[-2,2]$.
\end{theorem}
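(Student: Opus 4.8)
The plan is to deduce the weak convergence from convergence of Cauchy transforms. Each $\nu_n:=|p_n(x;\mu)|^2\,d\mu(x)$ is a probability measure on the fixed compact set $\supp(\mu)$, so the family is tight and any weak-$*$ limit is pinned down by its moments. Consequently it suffices to produce one density-$1$ subsequence $\mcn$ along which the Cauchy transform
\[
C_n(z):=\int_{\bbC}\frac{|p_n(w;\mu)|^2}{z-w}\,d\mu(w)
\]
converges, for $z$ in a neighborhood of $\infty$, to $\int\frac{d\omega(x)}{z-x}=\frac{1}{\sqrt{z^2-4}}=:G(z)$, the Cauchy transform of the equilibrium measure of $[-2,2]$; convergence of $C_n$ near $\infty$ is equivalent to convergence of all moments $\int x^k\,d\nu_n$, which forces $\nu_n\to\omega$. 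Uniform boundedness $|C_n(z)|\le\mathrm{dist}(z,\supp(\mu))^{-1}$ (since $\int|p_n|^2\,d\mu=1$) provides the normal-families control needed to pass from pointwise convergence to the weak statement.

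The engine is an exact scalar recursion for $C_n$. Setting $Q=p_n$ in (\ref{interim}) and using $\overline{p_n(w;\mu)}=p_n(w;\mu)$ for $w\in\supp(\mu)\subseteq\bbR$ gives $C_n(z)=p_n(z;\mu)q_n(z)$, where $q_n(z)=\int\frac{p_n(w;\mu)}{z-w}\,d\mu(w)$ is the $n$-th function of the second kind. For large $n$ the $q_n$ satisfy the same three-term recurrence as the $p_n$, so the Casoratian identity $p_nq_{n-1}-p_{n-1}q_n=a_n^{-1}$ holds with $a_n=\kappa_{n-1}/\kappa_n$. Writing $u_n(z)=p_n(z;\mu)/p_{n-1}(z;\mu)$, dividing the Casoratian by $C_{n-1}=p_{n-1}q_{n-1}$ gives $q_n/q_{n-1}=u_n-(a_nC_{n-1})^{-1}$, and substituting into $C_n=(p_n/p_{n-1})(q_n/q_{n-1})C_{n-1}$ yields
\[
C_n(z)=u_n(z)^2\,C_{n-1}(z)-\frac{u_n(z)}{a_n}.
\]
By Theorem \ref{realc} there is a density-$1$ subsequence along which $u_n(z)\to\varphi(z)$ for $z\notin\supp(\mu)$; since the value at $\infty$ of the ratio in (\ref{rlrt}) is exactly $a_n$, and the convergence in Theorem \ref{nis} is uniform up to $\infty$, we also get $a_n\to1$ along the same subsequence. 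Along good indices the recursion therefore approaches the affine map $x\mapsto\varphi(z)^2x-\varphi(z)$, whose unique fixed point is $\frac{\varphi(z)}{\varphi(z)^2-1}=\frac{1}{\sqrt{z^2-4}}=G(z)$; for $z\notin\ch(\mu)$ one has $|\varphi(z)|>1$, so this fixed point is \emph{repelling}.

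The main obstacle is to convert this instability into genuine convergence, and this is where regularity and Lemma \ref{anotherone} enter. Put $D_n=C_n-G$, so that $D_n=u_n^2D_{n-1}+\varepsilon_n$ with $\varepsilon_n\to0$ along good indices and $|u_n^2|\to|\varphi|^2>1$. Apply Lemma \ref{anotherone} to the subsequence from Theorem \ref{realc} to obtain a density-$1$ subsequence $\mcn_1$ such that, for fixed $L$, every large $n\in\mcn_1$ has the whole block $\{n,n+1,\dots,n+L\}$ inside the good set. Iterating the recursion across such a block expresses the later value as $D_{n+L}=\big(\prod_{j=n+1}^{n+L}u_j^2\big)D_n+S$; solving for the left endpoint and using the uniform bound $|D_m|\le2\,\mathrm{dist}(z,\supp(\mu))^{-1}=:2M$ gives
\[
|D_n|\le 2M\,|\varphi(z)|^{-2L}+C\sup_{i\ge n}|\varepsilon_i|,
\]
the error sum being controlled by a convergent geometric series with ratio $|\varphi|^{-2}$. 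Thus for each fixed $L$ we have $\limsup_{n\in\mcn_1}|D_n|\le 2M|\varphi|^{-2L}$; letting $L\to\infty$ and diagonalizing over $L$ produces a density-$1$ subsequence along which $D_n\to0$, i.e.\ $C_n\to G$. A final diagonalization over a countable dense set of points $z$ in the unbounded component of $\barc\setminus\supp(\mu)$, together with the normal-families bound, upgrades this to locally uniform convergence $C_n\to G$ and hence to the asserted weak convergence $\nu_n\to\omega$. The delicate points are the uniform control of the error terms $\varepsilon_i$ over a whole block and the two diagonalizations (over block length and over test points); everything else is bookkeeping once the expanding-recursion-against-boundedness mechanism is set up.
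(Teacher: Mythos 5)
Your proposal is correct, and it takes a genuinely different route from the paper. The paper disposes of Theorem \ref{realiff} in one stroke: regularity plus Theorem 1.1 of \cite{CNClass} gives (after Lemma \ref{anotherone}) a density-$1$ subsequence along which the recursion coefficients satisfy $a_{n+m}\to1$, $b_{n+m}\to0$ for every fixed $m$, and then the moments $\int x^k|p_n(x;\mu)|^2d\mu(x)=\langle\delta_n,J^k\delta_n\rangle$ are read off from the finitely many Jacobi parameters within distance $k$ of the $n$-th site, as in Proposition 3.3 of \cite{RealRat}; they converge to the free values, which are the moments of $d\omega$. You instead take Theorem \ref{realc} as input and convert block ratio asymptotics into weak asymptotics through the exact identity $C_n=u_n^2C_{n-1}-u_n/a_n$ (which is correct: $C_n=p_nq_n$ follows from (\ref{interim}) with $Q=p_n$, and the Casoratian $p_nq_{n-1}-p_{n-1}q_n=a_n^{-1}$ holds for all $n\geq1$, independently of the normalization of $\mu$), followed by the inverted-expansion argument: since $|\varphi(z)|>1$ off $[-2,2]$, solving the recursion backwards across a good block turns the repelling fixed point $G(z)=1/\sqrt{z^2-4}$ into an attracting one, and the a priori bound $|D_{n+L}|\leq 2M$ kills the initial condition. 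This is a nice logical reduction --- it shows that ratio asymptotics along blocks alone forces the weak asymptotic statement, with no reference to the Jacobi matrix or to \cite{CNClass} beyond what is already packaged in Theorem \ref{realc} --- at the cost of being longer than the paper's citation-based proof.

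Three points need tightening, none fatal. First, your justification that $a_n\to1$ along $\mcn$ misattributes the uniformity: Theorem \ref{realc} is not a consequence of Theorem \ref{nis} (the comparison function $\varphi(z)p_{n-1}(z;\mu)$ is not a polynomial, which is exactly the obstruction discussed in Section \ref{conn}), so you cannot quote its ``uniform up to $\infty$'' conclusion. The fact is true and provable with tools in the paper: by (\ref{king}) with $Q=p_{n-1}$, Cauchy--Schwarz, and Lemma \ref{below}, the ratios $p_{n-1}(z;\mu)/p_n(z;\mu)$ are bounded on a circle $\{|z|=R\}$ enclosing $\ch(\mu)$, uniformly in $n$; hence $g_n(z)=\varphi(z)p_{n-1}(z;\mu)/p_n(z;\mu)$, which is analytic on $\{|z|>R\}\cup\{\infty\}$ with $g_n(\infty)=a_n$, is uniformly bounded there by the maximum principle, and Vitali's theorem together with the pointwise convergence $g_n\to1$ from (\ref{rlrt}) gives $a_n\to1$. (Alternatively, the paper's construction of $\mcn$ yields (\ref{buffer}), which contains this statement directly.) Second, the error term in your block estimate should be the supremum of $|\varepsilon_i|$ over $i$ in the block $\{n+1,\dots,n+L\}$ --- these are good indices by Lemma \ref{anotherone}, so the supremum tends to $0$ as $n\to\infty$ in $\mcn_1$ --- not $\sup_{i\geq n}|\varepsilon_i|$, since bad indices carry no smallness. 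Third, both of your diagonalizations are unnecessary, and it is just as well: Lemma \ref{anotherone} supplies a single density-$1$ set $\mcn_1$ that works for every fixed shift $L$ simultaneously, and the subsequence in Theorem \ref{realc} is independent of $z$, so the same $\mcn_1$ gives $D_n(z)\to0$ for every $z\notin\ch(\mu)$ at once; extracting a density-$1$ ``diagonal'' from a nested family of density-$1$ subsequences would itself require an argument, so it is better to observe that no such extraction is needed before invoking normal families to get the locally uniform convergence of $C_n$ near $\infty$.
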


We will prove both results simultaneously.

\vspace{2mm}

\noindent\textit{Proofs.}
Let $\{a_n,b_{n}\}_{n\in\bbN}$ be the recursion coefficients for the orthonormal polynomials and the measure $\mu$, that is
\[
xp_n(x;\mu)=a_{n+1}p_{n+1}(x;\mu)+b_{n+1}p_n(x;\mu)+a_np_{n-1}(x;\mu).
\]
Since $\mu$ is regular, then by Theorem 1.1 in \cite{CNClass} and Lemma \ref{anotherone} above, we may find a subsequence $\mcn\subseteq\bbN$ of asymptotic density $1$ so that for every $m\in\bbZ$, we have
\begin{align}\label{buffer}
\lim_{{\nri}\atop{n\in\mcn}}a_{n+m}=1\qquad,\qquad\lim_{{\nri}\atop{n\in\mcn}}b_{n+m}=0.
\end{align}
Theorem \ref{realc} now follows by mimicking the second proof of Theorem 2.1 in \cite{RealRat}.  Similarly, Theorem \ref{realiff} follows by mimicking proof of Proposition 3.3 in \cite{RealRat}.
\begin{flushright}
$\Box$
\end{flushright}

\vspace{2mm}

\noindent\textit{Remark.}  An inspection of the proof of Proposition 3.3 in \cite{RealRat} and the second proof of Theorem 2.1 in \cite{RealRat} reveals that we do not actually need regularity to prove Theorems \ref{realc} and \ref{realiff}.  We only require boundedness of the recursion coefficients and (\ref{buffer}).  By choosing the coefficients $b_n$ to be identically zero and the coefficients $a_n$ to be very small on a sufficiently sparse subsequence, one can construct examples to show that the converse to both results is false. 

\vspace{2mm}

As indicated by (\ref{buffer}), the proofs of Theorems \ref{realc} and \ref{realiff} depend heavily on the existence of a recursion relation satisfied by the orthonormal polynomials.  Our main goal in this section is to prove analogs of (\ref{rlrt}) and (\ref{realweak}) for measures on the closed unit disk; a setting in which the orthonormal polynomials do not in general satisfy a finite term recursion relation.

\subsection{Ratio Asymptotics on the Disk.}\label{diskrat}

We begin with a result that is related to the conjecture in \cite{SaffConj}.  There, it is conjectured that for a measure $\mu$ of a certain form on $\bard$, one has $p_n(z;\mu)/(zp_{n-1}(z;\mu))\rightarrow1$ for all $z$ in $\barc\setminus\bard$.  As a corollary, one then concludes that $\kappa_n\kappa_{n-1}^{-1}\rightarrow1$ as $\nri$ (recall $\kappa_n$ is the leading coefficient of $p_n(\cdot;\mu)$).  We will show that in fact the corollary implies the conjecture.  More precisely, we will show that we need only verify the ratio asymptotic behavior at infinity to deduce it for all of $\barc\setminus\bard$.  This can be viewed as a unit disk analog of Theorem 1.7.4 in \cite{OPUC1}.

\begin{theorem}\label{Saffback}
Let $\mu$ be a measure on $\bard$ and $\mcn\subseteq\bbN$ a subsequence so that
\begin{align}\label{onerat}
\lim_{{\nri}\atop{n\in\mcn}}\kappa_{n}\kappa_{n-1}^{-1}=1.
\end{align}
Then
\[
\lim_{{\nri}\atop{n\in\mcn}}\frac{zp_{n-1}(z;\mu)}{p_n(z;\mu)}=1
\]
uniformly on compact subsets of $\barc\setminus\bard$.
\end{theorem}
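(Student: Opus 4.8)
The plan is to apply Theorem~\ref{nis} with the choice $Q_n(z)=z\,p_{n-1}(z;\mu)$, defined for $n\in\mcn$. Since $p_{n-1}(\cdot;\mu)$ has degree exactly $n-1$ and positive leading coefficient $\kappa_{n-1}$, this $Q_n$ has degree exactly $n$ and leading coefficient $\tau_n=\kappa_{n-1}$. If I can verify the two hypotheses of Theorem~\ref{nis} for this sequence, then the conclusion (\ref{keyconc}) reads exactly
\[
\lim_{{\nri}\atop{n\in\mcn}}\frac{z\,p_{n-1}(z;\mu)}{p_n(z;\mu)}=1,
\]
which is what we want. Because $\supp(\mu)\subseteq\bard$ and $\bard$ is convex, we have $\ch(\mu)\subseteq\bard$, so the uniform convergence on compact subsets of $\barc\setminus\ch(\mu)$ provided by Theorem~\ref{nis} yields in particular uniform convergence on compact subsets of $\barc\setminus\bard$; and Remark~1 after Theorem~\ref{nis} lets me run the argument along the subsequence $\mcn$ rather than all of $\bbN$.

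The second hypothesis of Theorem~\ref{nis} is immediate, since $\tau_n/\kappa_n=\kappa_{n-1}/\kappa_n\to1$ as $n\to\infty$ through $\mcn$ by the hypothesis (\ref{onerat}).

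The only step with any content is the first hypothesis, $\|Q_n\|_{L^2(\mu)}\to1$, and this is where the geometry of the disk is used. For the upper bound, $|z|\le1$ on $\supp(\mu)$ gives
\[
\|z\,p_{n-1}\|_{L^2(\mu)}^2=\int_{\bbC}|z|^2\,|p_{n-1}(z;\mu)|^2\,d\mu(z)\le\int_{\bbC}|p_{n-1}(z;\mu)|^2\,d\mu(z)=1,
\]
so $\limsup_{\nri}\|Q_n\|_{L^2(\mu)}\le1$. The matching lower bound is exactly the observation in Remark~2: as $Q_n/\tau_n$ is monic of degree $n$, the extremal property gives $\|Q_n\|_{L^2(\mu)}\ge\tau_n\|\Phi_n(\cdot;\mu)\|_{L^2(\mu)}=\tau_n\kappa_n^{-1}$, and the right-hand side tends to $1$ by the second hypothesis just checked. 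Combining the two bounds gives $\|Q_n\|_{L^2(\mu)}\to1$.

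With both hypotheses verified, Theorem~\ref{nis} (via Remark~1) finishes the proof. I do not anticipate a genuine obstacle: the single place where this argument is special rather than a formal consequence of Theorem~\ref{nis} is the bound $\|z\,p_{n-1}\|_{L^2(\mu)}\le1$, which depends on having $|z|\le1$ throughout $\supp(\mu)$. On a general support this fails, and one must instead multiply $p_{n-k}$ by a monic polynomial whose sup-norm on the support equals $1$ --- precisely the lemniscate situation flagged in the introduction --- so the disk hypothesis is what makes the clean statement with the single factor $z$ possible.
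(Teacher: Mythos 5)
Your proposal is correct and follows essentially the same route as the paper's own proof: apply Theorem~\ref{nis} with $Q_n(z)=z\,p_{n-1}(z;\mu)$, get the leading-coefficient condition directly from (\ref{onerat}), obtain the upper bound on $\|Q_n\|_{L^2(\mu)}$ from $|z|\le1$ on $\supp(\mu)$, and the lower bound from Remark~2 via the extremal property. The only difference is that you spell out the details the paper labels as ``obvious'' (including the observation $\ch(\mu)\subseteq\bard$), which is fine.
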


\noindent\textit{Remark.}  The condition (\ref{onerat}) does \textit{not} imply $\partial\bbD\subseteq\supp(\mu)$.  Indeed there are examples of measures whose essential support is exactly two points and (\ref{onerat}) holds with $\mcn=2\bbN+1$ (see Example 1.6.14 in \cite{OPUC1}).

\begin{proof}
We will apply Theorem \ref{nis} with $Q_n=zp_{n-1}(z;\mu)$.  We need only verify the first condition in Theorem \ref{nis}; the other condition is immediate from our hypotheses.  The upper bound
\[
\limsup_{\nri,n\in\mcn}\|Q_n\|_{\ltm}\leq1
\]
is obvious while the lower bound follows from Remark 2 following Theorem \ref{nis}.
\end{proof}

From Theorem \ref{Saffback}, we deduce the following corollary, which is an analog of (\ref{rlrt}) for regular measures on the unit disk.  It also tells us that if the conjecture in \cite{SaffConj} is false, then it can only fail along a sparse subsequence.

\begin{corollary}\label{reg}
Let $\mu$ be a regular measure on $\bard$.  There exists a subsequence $\mcn\subseteq\bbN$ of asymptotic density $1$ so that
\begin{align}\label{regurat}
\lim_{{\nri}\atop{n\in\mcn}}\frac{zp_{n-1}(z;\mu)}{p_n(z;\mu)}=1
\end{align}
uniformly on compact subsets of $\barc\setminus\bard$.
\end{corollary}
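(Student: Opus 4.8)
The plan is to reduce everything to Theorem \ref{Saffback}. That theorem already converts the single scalar condition $\kappa_n\kappa_{n-1}^{-1}\to1$ (along a subsequence) into the full uniform ratio asymptotics for $zp_{n-1}/p_n$ on compact subsets of $\barc\setminus\bard$. Consequently, to establish (\ref{regurat}) it suffices to produce a subsequence $\mcn\subseteq\bbN$ of asymptotic density $1$ along which $\kappa_n\kappa_{n-1}^{-1}\to1$, and the whole argument becomes a statement about the leading coefficients alone. So first I would forget about the polynomials and work only with the sequence $\{\kappa_n\}$.

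The input from regularity is a normalization of $\kappa_n$. Since $\mu$ is regular on $\bard$ we have $\partial\bbD\subseteq\supp(\mu)\subseteq\bard$, so the unit circle separates the disk from the unbounded component of $\barc\setminus\supp(\mu)$; hence $\pch(\mu)=\bard$ and $\mbox{capacity}(\supp(\mu))=\mbox{capacity}(\bard)=1$. Regularity then says $\kappa_n^{-1/n}\to1$, i.e. $\tfrac1n\log\kappa_n\to0$. The second, genuinely disk-specific, observation is that $\kappa_n$ is nondecreasing: because $\supp(\mu)\subseteq\bard$ we have $|z|\le1$ $\mu$-a.e., so using that $z\Phi_{n-1}$ is a monic competitor of degree $n$ together with the extremal property,
\[
\kappa_n^{-1}=\|\Phi_n\|_{\ltm}\le\|z\,\Phi_{n-1}\|_{\ltm}\le\|\Phi_{n-1}\|_{\ltm}=\kappa_{n-1}^{-1}.
\]
Thus $\kappa_n\ge\kappa_{n-1}$, and the increments $d_n:=\log\kappa_n-\log\kappa_{n-1}$ are nonnegative.

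The last step is a standard Cesàro-to-density argument, which I expect to be the only step needing a little care and is the effective substitute for Lemma \ref{anotherone} in this setting. Since $\sum_{k=1}^{n}d_k=\log\kappa_n-\log\kappa_0$ and $\tfrac1n\log\kappa_n\to0$, the nonnegative sequence $\{d_n\}$ has Cesàro average tending to $0$. For each $\varepsilon>0$, Markov's inequality gives that $\{n:d_n>\varepsilon\}$ has upper density at most $\varepsilon^{-1}\limsup_n\tfrac1n\sum_{k\le n}d_k=0$, so $d_n\to0$ in density; diagonalizing over $\varepsilon=1/j$ yields a single subsequence $\mcn$ of asymptotic density $1$ along which $d_n\to0$, equivalently $\kappa_n\kappa_{n-1}^{-1}\to1$. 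Applying Theorem \ref{Saffback} to this $\mcn$ then gives (\ref{regurat}). The main obstacle, such as it is, lies entirely in recognizing the monotonicity $\kappa_n\ge\kappa_{n-1}$ forced by $|z|\le1$ on the support and in packaging the density passage cleanly; once those are in hand the corollary is immediate from Theorem \ref{Saffback}.
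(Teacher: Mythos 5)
Your proof is correct and is essentially the paper's own argument: reduce to Theorem \ref{Saffback}, note that $\kappa_n\kappa_{n-1}^{-1}\geq 1$ by the extremal property and $|z|\le 1$ on $\supp(\mu)$, and use regularity (capacity of $\bard$ equals $1$) to conclude that the geometric means tend to $1$, forcing $\kappa_n\kappa_{n-1}^{-1}\to 1$ along a density-$1$ subsequence. Your logarithmic Ces\`aro--Markov--diagonalization passage is just an explicit write-up of the step the paper asserts in one line, so the two proofs coincide.
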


\noindent\textit{Remark 1.}  We will generalize this result in the example in Section \ref{conn}.

\vspace{2mm}

\noindent\textit{Remark 2.}  In Proposition 3.4 in \cite{SaffConj}, the author verifies boundedness of the ratio (\ref{regurat}) under related hypotheses.

\begin{proof}
To apply Theorem \ref{Saffback}, we need to verify that $\kappa_{n}\kappa_{n-1}^{-1}\rightarrow1$ along some subsequence of asymptotic density $1$.  If we define $\gamma_n=\kappa_{n}\kappa_{n-1}^{-1}$ then each $\gamma_n\geq1$.  Regularity implies $\left(\prod_{j=1}^n\gamma_j\right)^{1/n}\rightarrow1$ so $\gamma_n$ tends to $1$ along a subsequence of asymptotic density $1$ as desired.
\end{proof}

Corollary \ref{reg} cannot be improved to give us convergence as $n$ tends to infinity through all of $\bbN$ as the following example shows.

\vspace{2mm}

\noindent\textbf{Example.}  Let $\mu$ be a probability measure supported on the unit circle.  To every such measure, one can canonically assign a sequence $\{\alpha_n\}_{n=0}^{\infty}$ of complex numbers in the unit disk by setting $\alpha_n=-\overline{\Phi_{n+1}(0;\mu)}$.  Conversely, any such sequence determines a probability measure $\mu$ on $\partial\bbD$ (see Chapter 1 in \cite{OPUC1}).  This sequence satisfies
\begin{align}\label{alph}
\frac{\|\Phi_{n+1}(\cdot;\mu)\|_{\ltm}^2}{\|\Phi_n(\cdot;\mu)\|_{\ltm}^2}=1-|\alpha_n|^2
\end{align}
(see formula (1.5.12) in \cite{OPUC1}).  Let us define the measure $\mu$ by defining
\[
\alpha_n=
\begin{cases}
\frac{1}{2}, & \mbox{if } n=2^j \, \mbox{ for some } j\in\bbN \\
0, & \mbox{otherwise.}
\end{cases}
\]
One can easily see that this measure is regular.   However
\[
\frac{zp_{2^j}(z;\mu)}{p_{2^j+1}(z;\mu)}\bigg|_{z=\infty}=\frac{\sqrt{3}}{2},
\]
so we can only apply Corollary \ref{reg} to the subsequence $\mcn=\bbN\setminus\{2^j+1:j\in\bbN\}$.

\vspace{2mm}

Now let us turn our attention to measures supported on the unit circle $\partial\bbD$.  In this case, the polynomials do satisfy a recursion relation and therefore one can actually strengthen the conclusion of Theorem \ref{nis}.

\begin{theorem}\label{circase}
Let $\mu$ be a probability measure supported on the unit circle and let $Q_n$ be as in Theorem \ref{nis}.  Then
\[
\frac{Q_n(\cdot)}{p_n(\cdot;\mu)}\rightarrow1
\]
in $L^2(\partial\bbD,\dtpi)$.
\end{theorem}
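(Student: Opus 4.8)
The plan is to reduce the desired $L^2(\partial\bbD,\dtpi)$ statement to the $L^2(\mu)$ estimate that is already contained in the proof of Theorem \ref{nis}, using two standard facts about orthonormal polynomials on the unit circle. First I would record that the norm expansion in the proof of Theorem \ref{nis} in fact shows
\[
\|Q_n-p_n\|_{\ltm}^2=\|p_n\|_{\ltm}^2+\|Q_n\|_{\ltm}^2-2\Real\langle Q_n,p_n\rangle_{\mu}\longrightarrow 1+1-2=0,
\]
since the first hypothesis gives $\|Q_n\|_{\ltm}^2\to1$ and, by the orthogonality relation together with the second hypothesis, $\langle Q_n,p_n\rangle_{\mu}=\tau_n\kappa_n^{-1}\to1$ as $\nri$. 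Thus the entire difficulty is to pass from smallness in $\ltm$ to smallness of the ratio $Q_n/p_n-1$ in $L^2(\partial\bbD,\dtpi)$.

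For this I would introduce the reversed polynomial $p_n^*(z)=z^n\overline{p_n(1/\bar z)}$ and invoke two facts from the theory of orthonormal polynomials on the unit circle (see \cite{OPUC1}): all zeros of $p_n$ lie in the open disk $\{z:|z|<1\}$, so that $p_n$ is zero-free on $\partial\bbD$, the reversed polynomial $p_n^*$ is zero-free on $\bard$, and $|p_n(e^{i\theta})|=|p_n^*(e^{i\theta})|$ on the circle; and the Bernstein--Szeg\H{o} identity, namely that for every polynomial $\pi$ of degree at most $n$ one has
\[
\int_{\partial\bbD}\frac{|\pi(e^{i\theta})|^2}{|p_n^*(e^{i\theta})|^2}\,\dtpi=\int_{\bbC}|\pi|^2\,d\mu .
\]
The latter holds because $\dtpi/|p_n^*(e^{i\theta})|^2$ is a probability measure whose orthonormal polynomials through degree $n$ coincide with $p_0,\ldots,p_n$, so it reproduces the $\ltm$ inner product on polynomials of degree at most $n$.

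With these ingredients the conclusion is immediate: since $Q_n-p_n$ has degree at most $n$ and $p_n$ is zero-free on $\partial\bbD$ (so that $Q_n/p_n$ is genuinely an element of $L^2(\partial\bbD,\dtpi)$),
\[
\int_{\partial\bbD}\left|\frac{Q_n(e^{i\theta})}{p_n(e^{i\theta})}-1\right|^2\dtpi=\int_{\partial\bbD}\frac{|Q_n-p_n|^2}{|p_n|^2}\,\dtpi=\int_{\partial\bbD}\frac{|Q_n-p_n|^2}{|p_n^*|^2}\,\dtpi=\|Q_n-p_n\|_{\ltm}^2,
\]
where the middle equality uses $|p_n|=|p_n^*|$ on the circle and the last uses the Bernstein--Szeg\H{o} identity with $\pi=Q_n-p_n$. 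The right-hand side tends to $0$ by the first paragraph, which is exactly the claim.

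I expect the only real obstacle to be the careful justification of the second ingredient, that is, identifying $\dtpi/|p_n^*|^2$ as the Bernstein--Szeg\H{o} measure and hence establishing the exact reproducing identity. This rests on the (standard) facts that $p_n^*$ is non-vanishing on $\bard$ and that the Bernstein--Szeg\H{o} approximant shares orthonormal polynomials with $\mu$ through degree $n$; once these are cited from \cite{OPUC1}, the rest is the direct computation above. It is worth emphasizing that the containment of the zeros of $p_n$ in the open disk is precisely what both makes $Q_n/p_n$ well defined in $L^2(\partial\bbD,\dtpi)$ and allows the substitution of $p_n^*$ for $p_n$ in the denominator, so the availability of a recursion (equivalently, the circle geometry) is what lets us strengthen the exterior convergence of Theorem \ref{nis} to convergence on the support itself.
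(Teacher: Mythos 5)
Your proposal is correct, and it takes a genuinely different route from the paper's. The paper also starts from the Bernstein--Szeg\H{o} Approximation Theorem (Theorem 1.7.8 in \cite{OPUC1}), but applies it to $Q_n$ itself to obtain only norm convergence, $\int_{0}^{2\pi}|Q_n(\eitheta)/p_n(\eitheta;\mu)|^2\dtpi=\int_{\partial\bbD}|Q_n|^2d\mu\rightarrow1$, and then combines this with the locally uniform convergence on compact subsets of $\barc\setminus\bard$ furnished by Theorem \ref{nis} and a pseudo-uniform convexity theorem (Theorem 1 in \cite{Keldysh}) to upgrade norm convergence plus exterior convergence to convergence in $L^2(\partial\bbD,\dtpi)$. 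You instead apply the same Bernstein--Szeg\H{o} identity to the difference $Q_n-p_n$ (degree at most $n$), which gives the exact isometry
\[
\left\|\frac{Q_n}{p_n(\cdot;\mu)}-1\right\|^2_{L^2(\partial\bbD,\dtpi)}=\|Q_n-p_n(\cdot;\mu)\|^2_{\ltm},
\]
and the right-hand side tends to $0$ by precisely the norm expansion carried out inside the proof of Theorem \ref{nis} (hypotheses i and ii give $\|p_n\|^2_{\ltm}+\|Q_n\|^2_{\ltm}\rightarrow2$ and $\Real\langle Q_n,p_n\rangle_{\mu}=\Real[\tau_n\kappa_n^{-1}]\rightarrow1$). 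What your argument buys: it is self-contained, avoiding the appeal to \cite{Keldysh} entirely, it does not even use the conclusion of Theorem \ref{nis} (only an estimate from its proof), and it yields an exact quantitative identity rather than mere convergence. What the paper's argument buys: it is a soft argument reusing already-established results, and the Keldysh-type step is a template that works in $H^p$ and other settings where no exact isometry of your kind is available. Your supporting facts --- the zeros of $p_n$ lie in $\bbD$ so the ratio is well defined on $\partial\bbD$, $|p_n|=|p_n^*|$ on the circle, and the Bernstein--Szeg\H{o} measure $\dtpi/|p_n^*|^2$ reproduces $\ltm$ norms of polynomials of degree at most $n$ --- are all standard and correctly invoked, so there is no gap.
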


\begin{proof}
We use the Bernstein-Szeg\H{o} Approximation Theorem (Theorem 1.7.8 in \cite{OPUC1}) to calculate
\begin{align*}
\int_0^{2\pi}\left|\frac{Q_n(\eitheta)}{p_n(\eitheta;\mu)}\right|^2\dtpi&=
\int_{\partial\bbD}\left|Q_n(z)\right|^2d\mu(z)\rightarrow1
\end{align*}
by hypothesis.  Theorem \ref{nis} establishes uniform convergence on compact subsets of $\barc\setminus\bard$ and we have just established convergence of norms so the result follows by Theorem 1 in \cite{Keldysh}.
\end{proof}

\subsection{Weak Asymptotic Measures}\label{wkmsr}

Consider now the unit disk analog of (\ref{realweak}).  Lemma \ref{anotherone} tells us that if $\mu$ is a regular measure on $\bard$, then there exists a subsequence $\mcm\subseteq\bbN$ of asymptotic density $1$ so that for every $m\in\bbZ$ we have
\[
\lim_{{\nri}\atop{n\in\mcm}}\kappa_{n+m}\kappa_n^{-1}=1.
\]
To see this, we let $\mcn$ be the subsequence as in Corollary \ref{reg} and let $\mcm$ be the subsequence of $\mcn$ constructed by Lemma \ref{anotherone}.  Then if $m>0$, we have
\[
\frac{\kappa_{n+m}}{\kappa_n}=\frac{\kappa_{n+m}}{\kappa_{n+m-1}}\cdot\frac{\kappa_{n+m-1}}{\kappa_{n+m-2}}\cdot\cdots\cdot\frac{\kappa_{n+1}}{\kappa_n}.
\]
Since $\{n,n+1,\ldots,n+m\}\subseteq\mcn$ whenever $n\in\mcm$ (for large $n$), we see that all of the ratios in the above equality tend to $1$ as $\nri$ through $\mcm$.  A similar argument works if $m<0$.

This observation will allow us to make further conclusions about regular measures supported on $\bard$.  More specifically, we will address possible weak limits of the sequence of probability measures $\{|p_n(z;\mu)|^2d\mu(z)\}_{n\in\bbN}$.  Without the regularity hypothesis, the set of weak limit points can be hard to control.  Indeed, example 8.2.9 in \cite{OPUC1} shows that for measures on $\partial\bbD$, the set of weak limit points of the sequence $\{|p_n(z;\mu)|^2d\mu(z)\}_{n\in\bbN}$ can be all probability measures on $\partial\bbD$.  Theorem 9.3.1 in \cite{OPUC2} tells us that if $\mu$ is supported on $\partial\bbD$ then $|p_n(z;\mu)|^2d\mu(z)\rightarrow\dtpi$ weakly if and only if for every $k\in\bbN$ fixed we have $\Phi_n(0;\mu)\Phi_{n+k}(0;\mu)\rightarrow0$  as $\nri$ (see also Theorem 9.7.3 in \cite{OPUC2}).  It is easy to see that this condition is independent of regularity.

The author and Simon have separate proofs that if $\mu$ is regular on $\partial\bbD$ then
\[
\frac{1}{n+1}\sum_{j=0}^n|p_j(z;\mu)|^2d\mu(z)\rightarrow\dtpi
\]
weakly as $\nri$ (see \cite{SimWeak,WeakCD}).  This suggests convergence along a sequence of density $1$ and we will show this is the case.  In fact, we will show that if $\mu$ is any regular measure on $\bard$ then there is a subsequence $\mcn\subseteq\bbN$ of asymptotic density $1$ so that
\[
w\textrm{-}\lim_{{\nri}\atop{n\in\mcn}}|p_n(z;\mu)|^2d\mu(z)=\dtpi.
\]
The first step is to show that the weak limits we are interested in are measures on $\partial\bbD$.  This is the content of the following lemma.

\begin{lemma}\label{nocompact}
Let $\mu$ be a measure on $\bard$, $m\in\bbN$ fixed, and $\mcn\subseteq\bbN$ a subsequence so that
\[
\lim_{{\nri}\atop{n\in\mcn}}\kappa_{n+m}\kappa_{n}^{-1}=1.
\]
If $K\subseteq\bbD$ is a compact set then
\[
\lim_{{\nri}\atop{n\in\mcn}}\int_K|p_{n}(z;\mu)|^2d\mu(z)=0.
\]
\end{lemma}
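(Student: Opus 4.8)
The plan is to exploit the extremal property of the monic orthogonal polynomials together with the fact that $|z|\leq1$ throughout $\bard$. The hypothesis controls the leading coefficients, and via the identity $\|\Phi_n(\cdot;\mu)\|_{\ltm}^2=\kappa_n^{-2}$ this should translate into control of a single weighted $L^2(\mu)$ integral, from which the conclusion on any compact $K\subseteq\bbD$ follows immediately.

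First I would record the basic normalizations: since $p_n(z;\mu)=\kappa_n\Phi_n(z;\mu)$ and $\|\Phi_n(\cdot;\mu)\|_{\ltm}^2=\kappa_n^{-2}$, one has
\[
|p_n(z;\mu)|^2=\frac{|\Phi_n(z;\mu)|^2}{\|\Phi_n(\cdot;\mu)\|_{\ltm}^2}.
\]
The key step is then to feed the competitor $z^m\Phi_n(z;\mu)$, which is monic of degree $n+m$, into the extremal problem for degree $n+m$. The extremal property gives $\|\Phi_{n+m}(\cdot;\mu)\|_{\ltm}^2\leq\|z^m\Phi_n(\cdot;\mu)\|_{\ltm}^2$, and subtracting this from $\|\Phi_n(\cdot;\mu)\|_{\ltm}^2$ while using $|z|\leq1$ yields
\[
\|\Phi_n(\cdot;\mu)\|_{\ltm}^2-\|\Phi_{n+m}(\cdot;\mu)\|_{\ltm}^2\geq\int_{\bard}(1-|z|^{2m})|\Phi_n(z;\mu)|^2\,d\mu(z)\geq0.
\]

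Next I would convert the hypothesis. Since $\kappa_{n+m}\kappa_n^{-1}=\|\Phi_n(\cdot;\mu)\|_{\ltm}/\|\Phi_{n+m}(\cdot;\mu)\|_{\ltm}$, the assumption $\kappa_{n+m}\kappa_n^{-1}\to1$ through $\mcn$ is exactly $\|\Phi_{n+m}(\cdot;\mu)\|_{\ltm}^2/\|\Phi_n(\cdot;\mu)\|_{\ltm}^2\to1$. Dividing the displayed inequality by $\|\Phi_n(\cdot;\mu)\|_{\ltm}^2=\kappa_n^{-2}$ and using the normalization identity for $|p_n|^2$ above, the left side tends to $0$, so
\[
\lim_{{\nri}\atop{n\in\mcn}}\int_{\bard}(1-|z|^{2m})|p_n(z;\mu)|^2\,d\mu(z)=0.
\]

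Finally, because $K$ is a compact subset of the \emph{open} disk $\bbD$, there is an $r<1$ with $|z|\leq r$ for all $z\in K$, hence $1-|z|^{2m}\geq1-r^{2m}>0$ on $K$; this gives the pointwise bound $\int_K|p_n(z;\mu)|^2\,d\mu(z)\leq(1-r^{2m})^{-1}\int_{\bard}(1-|z|^{2m})|p_n(z;\mu)|^2\,d\mu(z)\to0$, which is the claim. The one nontrivial insight is choosing $z^m\Phi_n$ as the competitor: this is precisely what makes the weight $1-|z|^{2m}$ appear so that the leading-coefficient hypothesis becomes the vanishing of the weighted integral. Everything else is routine, and compactness inside the open disk is exactly what is needed to bound the weight away from $0$.
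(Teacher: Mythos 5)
Your proof is correct and rests on the same key idea as the paper's: feeding the monic competitor $z^m\Phi_n(z;\mu)$ into the extremal property for degree $n+m$ and exploiting $|z|\leq r<1$ on $K$ versus $|z|\leq1$ on $\bard$. The only difference is organizational --- the paper argues by contradiction after splitting the integral over $K$ and $\bard\setminus K$, whereas your direct weighted-integral formulation packages the same estimate more cleanly and even yields the quantitative bound $\int_K|p_n(z;\mu)|^2\,d\mu(z)\leq(1-r^{2m})^{-1}\bigl(1-\kappa_n^2\kappa_{n+m}^{-2}\bigr)$.
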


\begin{proof}
Let $K\subseteq\bbD$ be a fixed compact set and assume $K\subseteq\{z:|z|<R<1\}$.  For contradiction, let us suppose that there is a subsequence $\mcn_1\subseteq\mcn$ and $\beta>0$ such that
\[
\int_{K}|\Phi_{n}(z;\mu)|^2d\mu\geq\beta\|\Phi_{n}(\mu)\|^2_{L^2(\mu)}
\]
for all $n\in\mcn_1$.
Then for these $n$, we have
\[
\int_{\bard\setminus K}|\Phi_{n}(z;\mu)|^2d\mu\leq(1-\beta)\|\Phi_{n}(\mu)\|^2_{L^2(\mu)}.
\]
We then use the extremal property to calculate
\begin{align*}
\|\Phi_{n+m}(\mu)\|^2_{L^2(\mu)}&\leq\int_K|z^m\Phi_{n}(z;\mu)|^2d\mu+\int_{\bard\setminus K}|z^m\Phi_{n}(z;\mu)|^2d\mu\\
&\leq R^{2m}\int_{K}|\Phi_{n}(z;\mu)|^2d\mu+\int_{\bard\setminus K}|\Phi_{n}(z;\mu)|^2d\mu\\
&= R^{2m}\int_{K}|\Phi_{n}(z;\mu)|^2d\mu+R^{2m}\int_{\bard\setminus K}|\Phi_{n}(z;\mu)|^2d\mu\\
&\qquad\qquad\qquad+(1-R^{2m})\int_{\bard\setminus K}|\Phi_{n}(z;\mu)|^2d\mu\\
&\leq R^{2m}\|\Phi_{n}(\mu)\|^2_{L^2(\mu)}+(1-R^{2m})(1-\beta)\|\Phi_{n}(\mu)\|^2_{L^2(\mu)}\\
&=\left(1-\beta(1-R^{2m})\right)\|\Phi_{n}(\mu)\|^2_{L^2(\mu)},
\end{align*}
which contradicts our hypothesis when $n\in\mcn$ is sufficiently large.
\end{proof}

Now we can prove an analog of (\ref{realweak}) for regular measures on the closed unit disk.

\begin{theorem}\label{regequi}
Let $\mu$ be a regular measure on $\bard$.  There is a subsequence $\mcn\subseteq\bbN$ of asymptotic density $1$ so that
\[
w\textrm{-}\lim_{{\nri}\atop{n\in\mcn}}|p_n(z;\mu)|^2d\mu(z)=\dtpi.
\]
\end{theorem}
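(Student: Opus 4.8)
The plan is to combine the two preparatory results just established, namely the density-$1$ subsequence produced by the observation immediately preceding Lemma \ref{nocompact} and the mass-escape estimate of Lemma \ref{nocompact} itself, and then to pin down the limit by computing moments. First I would invoke that observation: since $\mu$ is regular, there is a subsequence $\mcn\subseteq\bbN$ of asymptotic density $1$ such that $\lim_{n\in\mcn}\kappa_{n+m}\kappa_n^{-1}=1$ for every $m\in\bbZ$. This is the subsequence along which I claim weak convergence. Because $\bard$ is compact, the probability measures $\{|p_n(z;\mu)|^2d\mu(z)\}_{n\in\mcn}$ form a weak-$*$ precompact family in a compact metrizable space, so it suffices to show that \emph{every} weak-$*$ limit point $\nu$ of this family equals $\dtpi$; convergence of the whole $\mcn$-indexed sequence then follows from uniqueness of the limit point.

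Second, I would confine $\nu$ to the unit circle. Fix a limit point, say $|p_{n_j}|^2d\mu\to\nu$ weakly with $n_j\in\mcn$. For a compact $K\subseteq\bbD$, Lemma \ref{nocompact} (applied with, say, $m=1$, whose hypothesis holds for $\mcn$) gives $\int_K|p_n|^2d\mu\to0$. Testing against a continuous function that equals $1$ on $K$ and is supported in a slightly larger compact subset $K'\subseteq\bbD$ shows $\nu(K)=0$, and letting $K$ exhaust $\bbD$ yields $\nu(\bbD)=0$, so $\supp\nu\subseteq\partial\bbD$.

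Third, and this is the crux, I would identify $\nu$ through its moments. For fixed $k\ge1$, expand $z^kp_n(z;\mu)=\sum_{j=0}^{n+k}c_j^{(n)}p_j(z;\mu)$ in the orthonormal basis. Orthonormality gives $\int z^k|p_n|^2d\mu=\langle p_n,z^kp_n\rangle_\mu=c_n^{(n)}$ and, by Parseval, $\sum_j|c_j^{(n)}|^2=\|z^kp_n\|_{\ltm}^2\le\int|p_n|^2d\mu=1$, the inequality because $|z|\le1$ on $\bard$. Matching leading coefficients forces $c_{n+k}^{(n)}=\kappa_n\kappa_{n+k}^{-1}$, which tends to $1$ along $\mcn$. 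Since the full $\ell^2$-sum of the coefficients is at most $1$ while the single coefficient $c_{n+k}^{(n)}$ already tends to $1$ in modulus, every other coefficient, in particular $c_n^{(n)}$, must tend to $0$. Hence $\int z^k|p_n|^2d\mu\to0$, and by conjugation $\int\bar z^k|p_n|^2d\mu\to0$, along $\mcn$ for each $k\ge1$. Passing to the limit along $n_j$ and using $\supp\nu\subseteq\partial\bbD$, all Fourier coefficients of $\nu$ vanish except the zeroth (which equals $\int d\nu=1$), so by uniqueness of Fourier coefficients $\nu=\dtpi$.

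The genuinely substantive step is the third one: the first two are quoted from the preceding lemmas or are soft weak-$*$ compactness boilerplate. The points to be careful about there are the direction of the limit-point argument (showing all limit points coincide rather than exhibiting convergence directly) and the fact that, once Lemma \ref{nocompact} confines $\nu$ to $\partial\bbD$, the analytic moments $\int z^k\,d\nu$ together with their conjugates already determine $\nu$ via uniqueness of Fourier coefficients (equivalently, Stone--Weierstrass on the circle). This is precisely what lets Lemma \ref{nocompact} do real work: without knowing $\nu$ lives on the circle I would have to control the mixed moments $\int z^k\bar z^\ell|p_n|^2d\mu$, whereas on $\partial\bbD$ the one-sided moments suffice.
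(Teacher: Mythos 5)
Your proposal is correct, and its overall skeleton---the density-one subsequence along which $\kappa_{n+m}\kappa_n^{-1}\rightarrow1$, Lemma \ref{nocompact} to force any weak limit point onto $\partial\bbD$, and a weak-$*$ compactness/uniqueness-of-limit-points argument---matches the paper's proof. Where you genuinely diverge is in the identification of the limit. The paper tests the weakly convergent sequence against $|\Phi_k(z;\sigma)|^2$, where $\sigma$ is the limit point itself: the extremal property gives $\kappa_n^2\kappa_{n+k}^{-2}\leq\int_{\bard}|\Phi_k(z;\sigma)p_n(z;\mu)|^2d\mu(z)$, whence $\|\Phi_k(\cdot;\sigma)\|_{L^2(\sigma)}=1$ for every $k$, and then the relation (\ref{alph}) forces all Verblunsky coefficients of $\sigma$ to vanish, i.e. $\sigma=\dtpi$. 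You instead use the fixed test functions $z^k$: expanding $z^kp_n(z;\mu)$ in the orthonormal basis, matching leading coefficients to get $c_{n+k}^{(n)}=\kappa_n\kappa_{n+k}^{-1}\rightarrow1$, and applying Parseval together with $|z|\leq1$ on $\bard$ to conclude $\int z^k|p_n|^2d\mu\rightarrow0$ along all of $\mcn$, after which uniqueness of Fourier coefficients on the circle pins down the limit. The two arguments rest on the same underlying extremal inequality---your bound $|c_{n+k}^{(n)}|^2\leq\|z^kp_n\|_{\ltm}^2$ is precisely the extremal property applied to the monic polynomial $z^k\Phi_n(z;\mu)$---but your version avoids the self-referential test polynomial and all OPUC-specific machinery (Verblunsky coefficients), replacing it with elementary Fourier analysis; it also yields the slightly stronger, limit-free statement that the analytic moments vanish along the whole subsequence $\mcn$, not merely along subsequences realizing weak limits. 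The paper's route is shorter given that (\ref{alph}) was already quoted in Section \ref{diskrat}. Your closing observation---that one-sided moments determine the limit only because Lemma \ref{nocompact} has already confined it to $\partial\bbD$---is exactly the right accounting of where that lemma does its work in either proof.
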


\begin{proof}
As mentioned above, we may begin with a subsequence $\mcn\subseteq\bbN$ of asymptotic density $1$ so that for every $m\in\bbN$ we have
\[
\lim_{{\nri}\atop{n\in\mcn}}\kappa_{n+m}\kappa_n^{-1}=1.
\]
It then follows from Lemma \ref{nocompact} that if $K\subseteq\bbD$ is compact, we have
\begin{align*}
\lim_{{\nri}\atop{n\in\mcn}}\int_K|p_n(z;\mu)|^2d\mu(z)=0.
\end{align*}
We conclude that any weak limit of the measures $\{|p_n(z;\mu)|^2d\mu(z)\}_{n\in\mcn}$ is supported on $\partial\bbD$.

Let $\sigma$ be such a weak limit point and $\mcn_{\sigma}\subseteq\mcn$ the corresponding subsequence.  Then for every fixed $k\in\bbN$ we have (by the extremal property)
\begin{align}\label{lown}
\kappa_{n}^2\kappa_{n+k}^{-2}\leq\int_{\bard}|\Phi_k(z;\sigma)p_n(z;\mu)|^2d\mu(z).
\end{align}
As $\nri$ through $\mcn_{\sigma}$, the left hand side of (\ref{lown}) tends to $1$ while the right hand side tends to $\|\Phi_k(\cdot;\sigma)\|^2_{L^2(\sigma)}$.  However, clearly $\|\Phi_k(\cdot;\sigma)\|^2_{L^2(\sigma)}\leq\|z^k\|^2_{L^2(\sigma)}=1$, so we must have $\|\Phi_k(\cdot;\sigma)\|^2_{L^2(\sigma)}=1$, which implies (using notation from the earlier example)
\[
\alpha_j(\sigma)=0,\qquad j=0,1,2,\ldots,k-1.
\]
Since $k\in\bbN$ was arbitrary, this implies $\sigma$ is normalized arc-length measure on $\partial\bbD$ as desired.
\end{proof}

Finally, we conclude this section by exploring the behavior of the ratio (\ref{king}) when $z$ is inside the convex hull of the support of $\mu$ but outside the polynomial convex hull of the support of $\mu$.  The calculations in the proof of Theorem \ref{nis} imply that the second term in the numerator on the right hand side of (\ref{split}) still tends to $0$ in this case, so we can obtain the same conclusion as Theorem \ref{nis} (without the uniformity) if we can show that the denominator on the right hand side of (\ref{split}) stays away from zero (perhaps on some subsequence).

It is clear that if $z\not\in\supp(\mu)$ then the sequence
\[
\left\{\int_{\bbC}\frac{|p_n(w;\mu)|^2}{z-w}d\mu(w)\right\}_{n\geq0}
\]
is bounded uniformly on compact subsets of $\barc\setminus\supp(\mu)$, so Montel's Theorem implies that some subsequence converges uniformly on compact subsets to an analytic function $h(z)$.  It is possible that the limiting function $h(z)$ vanishes at a point inside the convex hull of the support of the measure.  For example, let $\mu$ be a measure supported on $[-2,-1]\cup[1,2]$ satisfying $\mu(A)=\mu(-A)$ for all measurable sets $A$.  Since the measure is symmetric about zero, so are the orthonormal polynomials so we conclude
\[
\int_{\supp(\mu)}\frac{|p_n(w;\mu)|^2}{w}d\mu(w)=0,
\]
i.e. the limiting function $h(z)$ satisfies $h(0)=0$.

However, this example tells us how we can look for the zeros of $h(z)$.  Indeed, Proposition 2.3 in \cite{WeakCD} tells us that if $\mu$ is a regular measure, then for any function $f$ that is analytic in a neighborhood of $\pch(\mu)$ we have
\[
\lim_{\nri}\frac{1}{n+1}\sum_{j=0}^n\int_{\bbC}f(w)|p_j(w;\mu)|^2d\mu(w)=\int_{\bbC}f(w)d\omega_{\mu}(w)
\]
where $\omega_{\mu}$ is the equilibrium measure for the support of $\mu$ (assuming $\mbox{capacity}(\supp(\mu))>0$; see Theorem 3.6.1 in \cite{StaTo}).  Therefore, if
\begin{align}\label{eqzer}
\int_{\bbC}\frac{1}{z-w}d\omega_{\mu}(w)\neq0
\end{align}
then we can find a subsequence $\mcn_z\subseteq\bbN$ of positive density such that
\[
\inf_{n\in\mcn_z}\left\{\left|\int_{\bbC}\frac{|p_n(w;\mu)|^2}{z-w}d\mu(w)\right|\right\}>0.
\]
We conclude that if $\mu$ is regular, $\mbox{capacity}(\supp(\mu))>0$, the hypotheses of Theorem \ref{nis} are satisfied, $z\not\in\pch(\mu)$, and (\ref{eqzer}) holds, then the conclusion (\ref{keyconc}) holds as $n$ tends to infinity through $\mcn_z$.  As mentioned earlier, we will show later by means of an example that one cannot in general extend Theorem \ref{nis} to include the boundary of $\pch(\mu)$ (see Section \ref{chr} below).

\section{Application:  Measures Supported on Regions}\label{conn}

If a measure $\mu$ is supported on an arbitrary region $G$, we cannot prove a result quite as precise as Theorem \ref{realc} or Corollary \ref{reg} using our methods.  The main difficulty is that the conformal maps sending the exterior of $\bard$ to the exterior of $\bard$ or the complement of $[-2,2]$ have finite Laurent expansions, which simplifies matters computationally.  To make up for this, we will approximate the exterior conformal map with polynomials.  The price we will pay is that we will reach a conclusion about $p_n/p_{n-k_n}$ for a possibly unbounded sequence $\{k_n\}$ (but see the example below).

Our proof in this setting will require use of a specific sequence of polynomials called the \textit{Faber polynomials} (see \cite{MDFaber}), which we will denote by $\{F_n(z)\}_{n\geq0}$.  Given a bounded region $G\subseteq\bbC$, let $\Omega$ be the unbounded component of $\barc\setminus\barg$, which is simply connected in the extended complex plane.  Let $\varphi$ denote the conformal map sending $\Omega$ to $\barc\setminus\bard$ satisfying $\varphi(\infty)=\infty$ and $\varphi'(\infty)>0$.  There are three conditions given in \cite{Geronimus} that guarantee the uniform convergence of $F_n-\varphi^n$ to $0$ on $\overline{\Omega}$ as $\nri$.  Whenever this convergence property holds (for example if $G$ satisfies any of the three conditions in \cite{Geronimus}), we will say $G$ is of class $\Gamma$ and write $G\in\Gamma$.  Also note that if $G$ has logarithmic capacity $1$ then $F_n$ is a monic polynomial of degree $n$ for every $n\in\bbN$.  

Our result is the following:

\begin{theorem}\label{keps}
Let $\mu$ be a measure on the closure of a bounded region $G\in\Gamma$ with logarithmic capacity $1$.  Let $\mcn,\mcm\subseteq\bbN$ be infinite subsequences so that for each $j\in\mcm$, $\kappa_{n}\kappa_{n-j}^{-1}\rightarrow1$ as $\nri$ through $\mcn$.  Then there exists a non-decreasing and unbounded sequence $\{k_{n}\}_{n\in\mcn}$ of elements of $\mcm$ such that
\begin{align}\label{gcase}
\lim_{{\nri}\atop{n\in\mcn}}\frac{\varphi^{k_n}(z)p_{n-k_n}(z;\mu)}{p_{n}(z;\mu)}=1
\end{align}
for all $z\not\in\ch(\mu)$.  Furthermore, the convergence is uniform on compact subsets of $\barc\setminus\ch(\mu)$.
\end{theorem}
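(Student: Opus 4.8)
The plan is to apply Theorem \ref{nis} along $\mcn$ (invoking Remark 1, since the $Q_n$ need only be defined for $n\in\mcn$) to the polynomials
\[
Q_n(z)=F_{k_n}(z)\,p_{n-k_n}(z;\mu),
\]
where $F_{k_n}$ is the Faber polynomial of degree $k_n$ and $\{k_n\}_{n\in\mcn}$ is a non-decreasing, unbounded sequence in $\mcm$ to be chosen. Since $G$ has logarithmic capacity $1$, $F_{k_n}$ is monic, so $Q_n$ has degree exactly $n$ and leading coefficient $\tau_n=\kappa_{n-k_n}$. Hence hypothesis (\ref{leading}) of Theorem \ref{nis} becomes $\kappa_{n-k_n}\kappa_n^{-1}\to1$, and by Remark 2 following Theorem \ref{nis} this same condition forces $\liminf_{n\in\mcn}\|Q_n\|_{L^2(\mu)}\ge1$; so to verify (\ref{norm}) I only need the matching upper bound.

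The main point --- and the only place the hypotheses $G\in\Gamma$ and $\mathrm{capacity}(\overline{G})=1$ enter --- is this norm upper bound. Because the $p_m$ are orthonormal,
\[
\|Q_n\|_{L^2(\mu)}^2=\int_{\bbC}|F_{k_n}(w)|^2\,|p_{n-k_n}(w;\mu)|^2\,d\mu(w)\le\|F_{k_n}\|_{L^\infty(\supp(\mu))}^2.
\]
Since $\supp(\mu)\subseteq\overline{G}\subseteq\pch(\overline{G})=\barc\setminus\Omega$ and $F_{k_n}$ is a polynomial, the maximum modulus principle gives $\|F_{k_n}\|_{L^\infty(\supp(\mu))}\le\|F_{k_n}\|_{L^\infty(\overline{G})}=\|F_{k_n}\|_{L^\infty(\partial\Omega)}$, the maximum over $\overline G$ being attained on the outer boundary $\partial\Omega$. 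On $\partial\Omega$ one has $|\varphi|=1$, hence $|\varphi^{k_n}|=1$; and since $G\in\Gamma$ the quantity $\varepsilon_k:=\|F_k-\varphi^k\|_{L^\infty(\overline{\Omega})}\to0$. Therefore $\|F_{k_n}\|_{L^\infty(\overline{G})}\le1+\varepsilon_{k_n}$, so $\limsup_{n\in\mcn}\|Q_n\|_{L^2(\mu)}\le1$ as soon as $k_n\to\infty$.

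Next I would select $\{k_n\}$ by a diagonal argument like the one in the proof of Lemma \ref{anotherone}. Enumerating $\mcm=\{j_1<j_2<\cdots\}$ and using that $\kappa_n\kappa_{n-j_i}^{-1}\to1$ (through $\mcn$) for each fixed $i$, I choose an increasing sequence $N_1<N_2<\cdots$ with $N_i\ge j_i$ and $|\kappa_{n-j_i}\kappa_n^{-1}-1|<1/i$ for all $n\in\mcn$ with $n\ge N_i$, and set $k_n=j_i$ for $N_i\le n<N_{i+1}$. Then $\{k_n\}$ is non-decreasing, unbounded, lies in $\mcm$, satisfies $k_n\le n$, and gives $\kappa_{n-k_n}\kappa_n^{-1}\to1$; in particular $k_n\to\infty$, so both hypotheses of Theorem \ref{nis} hold. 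Theorem \ref{nis} then yields $F_{k_n}(z)p_{n-k_n}(z;\mu)/p_n(z;\mu)\to1$, uniformly on compact subsets of $\barc\setminus\ch(\mu)$.

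Finally I would replace $F_{k_n}$ by $\varphi^{k_n}$. A compact set $K\subseteq\barc\setminus\ch(\mu)$ lies in $\Omega$ (as $\overline{G}\subseteq\ch(\mu)$ and $\pch(\mu)\subseteq\ch(\mu)$), so $\inf_K|\varphi|>1$ and $|\varphi^{k_n}|\to\infty$ uniformly on $K$, while $\|F_{k_n}-\varphi^{k_n}\|_{L^\infty(\overline{\Omega})}\to0$; hence $\varphi^{k_n}/F_{k_n}\to1$ uniformly on $K$ (the point at infinity causing no trouble, since there the ratio in (\ref{gcase}) equals $\kappa_{n-k_n}\kappa_n^{-1}\to1$). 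Multiplying this by the conclusion of the previous paragraph gives (\ref{gcase}) with the asserted uniformity. The only genuinely new step is the norm upper bound; once $\|F_{k_n}\|_{L^\infty(\overline{G})}\to1$ is established, the diagonalization and the division by $\varphi^{k_n}/F_{k_n}$ are routine.
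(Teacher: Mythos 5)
Your proof is correct and takes essentially the same approach as the paper: the same choice $Q_n=F_{k_n}(z)p_{n-k_n}(z;\mu)$, the same two verifications (lower norm bound from Remark 2, upper bound $\|F_{k_n}\|_{L^\infty(\barg)}\leq 1+\varepsilon_{k_n}$ from $G\in\Gamma$ and $|\varphi|=1$ on $\partial\Omega$), and the same final passage from $F_{k_n}$ to $\varphi^{k_n}$, with your diagonal construction of $\{k_n\}$ simply making explicit what the paper compresses into ``if $\{k_n\}$ grows slowly enough.'' The one blemish is your parenthetical justification $\overline{G}\subseteq\ch(\mu)$, which does not follow from the hypotheses (the support of $\mu$ may be much smaller than $\overline{G}$); this is really an ambiguity inherited from the statement itself---$\varphi$ is only defined on $\overline{\Omega}$, so the conclusion can only be read there---and the paper's own proof passes over the same point silently.
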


\begin{proof}
We will apply Theorem \ref{nis} with $Q_n(z)=F_{k_n}(z)p_{n-k_n}(z;\mu)$ for some appropriate $k_n\in\mcm$.  First note that our hypotheses imply that if the sequence $\{k_n\}_{n\in\mcn}$ grows slowly enough then $\kappa_{n}\kappa_{n-k_n}^{-1}$ tends to $1$ as $\nri$ through $\mcn$.  Therefore, the second condition of Theorem \ref{nis} is satisfied by $Q_n$.  Remark 2 following Theorem \ref{nis} puts a lower bound on the $\liminf$ of the $\ltm$-norm of $Q_n$.  To put an upper bound on the $\limsup$, we see
\begin{align*}
\int_{\barg}|F_{k_n}(z)p_{n-{k_n}}(z;\mu)|^2d\mu(z)&\leq\|F_{k_{n}}\|^2_{L^{\infty}(\barg)}
\end{align*}
for every $n\in\bbN$.  Therefore $\|Q_n\|_{\ltm}\leq1+\epsilon_n$ where $\epsilon_n\geq0$ tends to $0$ as $\nri$ through $\mcn$ provided $\{k_{n}\}_{n\in\mcn}$ is unbounded (this is because $G\in\Gamma$ and $|\varphi(w)|=1$ for all $w\in\partial \Omega$).  By invoking Theorem \ref{nis}, we conclude that
\begin{align}\label{fbound}
\lim_{{\nri}\atop{n\in\mcn}}\frac{{F_{k_n}(z)p_{n-k_n}(z;\mu)}}{p_n(z;\mu)}=1
\end{align}
for all $z\nch(\mu)$ and the convergence is uniform on compact subsets of $\barc\setminus\ch(\mu)$.  Since $F_n-\varphi^n$ tends to $0$ on $\overline{\Omega}$ as $\nri$, (\ref{fbound}) implies (\ref{gcase}).
\end{proof}

Although Theorem \ref{keps} is an analog of Theorem \ref{Saffback} for more general supports, proving an analog of Corollary \ref{reg} or Theorem \ref{realc} is more challenging.  The difficulty lies in the fact that it is possible to have $\|\Phi_n(\cdot;\mu)\|_{\ltm}>\|\Phi_{n-1}(\mu)\|_{\ltm}$ when the support of the measure is not the closed unit disk.  The following example shows that we can strengthen the conclusion of Theorem \ref{keps} to more closely resemble that of Theorem \ref{realc} if some power of the conformal map $\varphi$ is a monic polynomial.

\vspace{2mm}

\noindent\textbf{Example.}  Consider the set $E_m:=\{z:|z^m-1|\leq1\}$ (pictured below for $m=3$).
\begin{figure}[h!]\label{GDrop}
\begin{center}
\includegraphics[scale=.77]{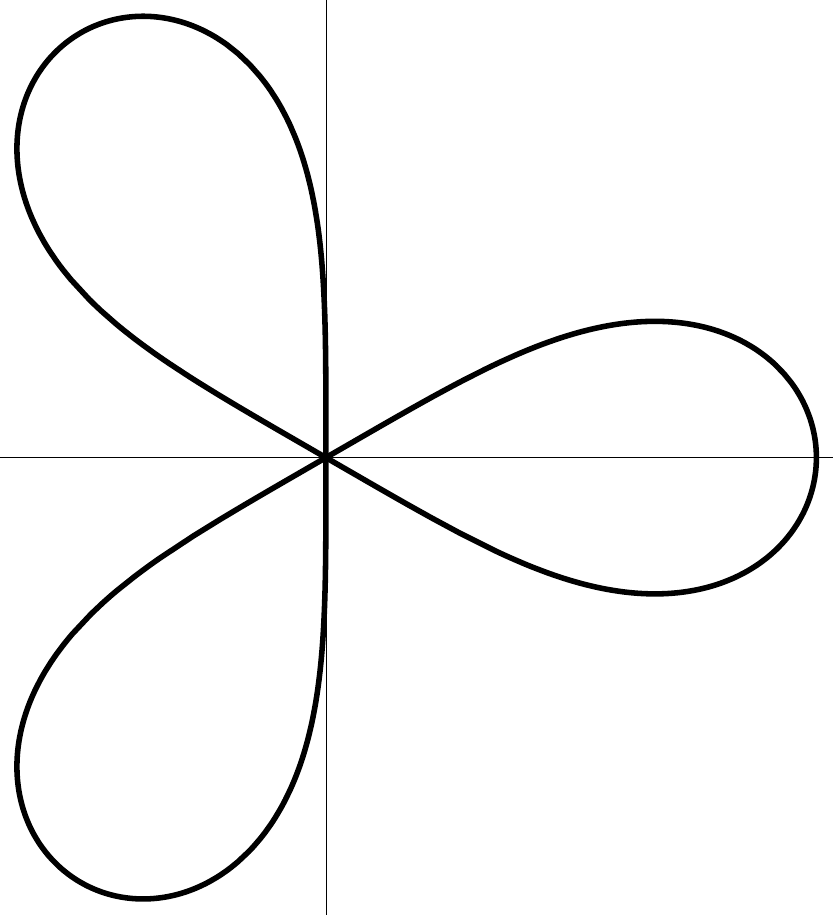}
\caption{The boundary of the set $E_3$.}
\end{center}
\end{figure}
In this case, $F_m(z)=z^m-1$ (see Example 3.8 in \cite{MDFaber}) so that if $\mu$ is a measure supported on $E_m$, we can write $\|\Phi_{n+m}(\cdot;\mu)\|_{\ltm}\leq\|\Phi_n(\cdot;\mu)\|_{\ltm}$ for all $n\in\bbN$.  If $\mu$ is regular, then we have
\[
1=\lim_{\nri}\left(\kappa_n\kappa_{n+1}\cdots\kappa_{n+m-1}\right)^{1/n}=
\lim_{\nri}\left(\kappa_1\cdots\kappa_{m}\prod_{j=1}^{n-1}\kappa_{j+m}\kappa_j^{-1}\right)^{1/n}.
\]
We can now apply the same reasoning as in the proof of Corollary \ref{reg} to conclude that there is a subsequence $\mcn\subseteq\bbN$ of asymptotic density $1$ so that $\lim_{\nri,n\in\mcn}\kappa_n\kappa_{n-m}^{-1}=1$.  Furthermore, $\|F_m(z)\|_{L^{\infty}(E_m)}=1$ so the proof of Theorem \ref{keps} shows that in fact we have
\[
\lim_{{\nri}\atop{n\in\mcn}}\frac{F_m(z)p_{n-m}(z;\mu)}{p_n(z;\mu)}=\lim_{{\nri}\atop{n\in\mcn}}\frac{(z^m-1)p_{n-m}(z;\mu)}{p_n(z;\mu)}=1
\]
for all $z\nch(\mu)$.  Notice that if we set $m=1$ we recover Corollary \ref{reg}.  The same calculation applies in any situation where some power of the conformal map $\varphi$ is a monic polynomial.

\section{Application:  Stability Under Perturbation.}\label{stabl}

\subsection{The Uvarov Transform.}\label{uvar}

Another application of Theorem \ref{nis} is to show that the behavior of the polynomials $\{p_n(z;\mu)\}_{n\geq0}$ is stable under certain perturbations of the measure.  In the following example, we consider the \textit{Uvarov Transform} of a measure (see \cite{Uvarov}), meaning we add a single point mass to the measure $\mu$.

\vspace{2mm}

\noindent\textbf{Example.}
Let $\mu$ be a measure with compact support and $x\in\bbC$.
We will show that for any $t>0$ we have
\begin{align}\label{exone}
\lim_{\nri}\frac{p_{n}(z;\mu+t\delta_{x})}{p_{n}(z;\mu)}=1
\end{align}
uniformly on compact subsets of $\barc\setminus\ch(\mu)$ if and only if
\begin{align}\label{nevailike}
\lim_{\nri}\frac{|p_n(x;\mu)|^2}{K_{n-1}(x,x;\mu)}=0,
\end{align}
where $K_n(y,z;\mu)=\sum_{j=0}^np_j(y;\mu)\overline{p_j(z;\mu)}$.
We will apply Theorem \ref{nis} with $Q_n=p_{n}(z;\mu+t\delta_{x})$.
The proof of Theorem 10.13.3 in \cite{OPUC2} applies in this setting also to show that
\begin{align}\label{normrat}
\frac{\|\Phi_{n}(\cdot;\mu+t\delta_{x})\|^2_{L^2(\mu+t\delta_{x})}}{\|\Phi_{n}(\cdot;\mu)\|^2_{L^2(\mu)}}&=
\frac{1+t K_n(x,x;\mu)}{1+t K_{n-1}(x,x;\mu)}\\
\nonumber&=1+\frac{|p_n(x;\mu)|^2}{K_{n-1}(x,x;\mu)}\cdot\frac{t}{t+K_{n-1}(x,x;\mu)^{-1}}.
\end{align}
Notice that,
\[
\lim_{\nri}\frac{t}{t+K_{n-1}(x,x;\mu)^{-1}}
\]
always exists and lies in the interval $(0,1]$.  Therefore, if we assume (\ref{nevailike}) holds then (\ref{normrat}) verifies the second condition in Theorem \ref{nis} for $Q_n$.  To verify the first condition, write $Q_n=\tau_n\Phi_{n}(\cdot;\mu+t\delta_{x})$ and notice
\[
\|\Phi_{n}(\cdot;\mu+t\delta_{x})\|^2_{L^2(\mu+t\delta_x)}\geq\|\Phi_n(\cdot;\mu)\|^2_{\ltm}+\|\Phi_{n}(\cdot;\mu+t\delta_{x})\|^2_{L^2(t\delta_x)}.
\]
Dividing through by $\|\Phi_{n}(\cdot;\mu+t\delta_{x})\|^2_{L^2(\mu+t\delta_x)}$ and using our above calculations, we get $\|Q_n\|_{L^2(t\delta_x)}\rightarrow0$ as $\nri$, which verifies the first condition in Theorem \ref{nis} and hence proves (\ref{exone}).

If (\ref{nevailike}) does not hold, then (\ref{normrat}) shows that we do not even get the the desired convergence at infinity so we cannot possibly have (\ref{exone}).

\vspace{2mm}

\noindent\textit{Remark 1.}  The condition (\ref{nevailike}) is discussed further in Theorem 10.13.5 in \cite{OPUC2} and also in \cite{BLS}.

\vspace{2mm}

\noindent\textit{Remark 2.}  The Uvarov Transform on the unit circle was studied extensively by Wong in \cite{Wong}.

\vspace{2mm}

In fact, the calculations in the above example prove our next result.  It shows that if a measure is perturbed in a way that does not affect the asymptotic behavior of the monic orthogonal polynomial norms, then it also does not affect the asymptotic behavior of the orthonormal polynomials outside $\ch(\mu)$.

\begin{corollary}\label{normstrng}
Let $\mu_1$ and $\mu_2$ be two measures with compact support such that
\begin{align*}\label{normpres}
\lim_{\nri}\frac{\|\Phi_n(\cdot;\mu_1)\|_{L^2(\mu_1)}}{\|\Phi_n(\cdot;\mu_1+\mu_2)\|_{L^2(\mu_1+\mu_2)}}=1.
\end{align*}
Then
\[
\lim_{\nri}\frac{p_n(z;\mu_1+\mu_2)}{p_n(z;\mu_1)}=1
\]
for all $z\nch(\mu_1)$.
\end{corollary}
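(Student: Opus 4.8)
The plan is to apply Theorem \ref{nis} with base measure $\mu_1$ and test sequence $Q_n = p_n(\cdot;\mu_1+\mu_2)$, and to verify the two hypotheses of that theorem. Throughout, write $\kappa_n$ for the leading coefficient of $p_n(\cdot;\mu_1)$ and $\tau_n$ for the leading coefficient of $p_n(\cdot;\mu_1+\mu_2)$, so that $\kappa_n^{-1}=\|\Phi_n(\cdot;\mu_1)\|_{L^2(\mu_1)}$ and $\tau_n^{-1}=\|\Phi_n(\cdot;\mu_1+\mu_2)\|_{L^2(\mu_1+\mu_2)}$.

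First I would note that the second hypothesis of Theorem \ref{nis} is simply a restatement of the corollary's hypothesis. Indeed, the assumed ratio of monic norms is
\[
\frac{\|\Phi_n(\cdot;\mu_1)\|_{L^2(\mu_1)}}{\|\Phi_n(\cdot;\mu_1+\mu_2)\|_{L^2(\mu_1+\mu_2)}}=\frac{\kappa_n^{-1}}{\tau_n^{-1}}=\frac{\tau_n}{\kappa_n},
\]
so the hypothesis says precisely that $\tau_n/\kappa_n\to1$ as $\nri$, which is the second condition in Theorem \ref{nis}.

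Next I would establish the first condition, namely $\|Q_n\|_{L^2(\mu_1)}\to1$. For the upper bound, additivity of the integral together with the normalization $\|Q_n\|_{L^2(\mu_1+\mu_2)}=1$ gives
\[
\|Q_n\|^2_{L^2(\mu_1)}=1-\|Q_n\|^2_{L^2(\mu_2)}\leq1,
\]
hence $\limsup_{\nri}\|Q_n\|_{L^2(\mu_1)}\leq1$. For the lower bound I would argue exactly as in Remark 2 following Theorem \ref{nis}: writing $Q_n=\tau_n\Phi_n(\cdot;\mu_1+\mu_2)$ and using the extremal property of $\Phi_n(\cdot;\mu_1)$ in $L^2(\mu_1)$, one has
\[
\|Q_n\|_{L^2(\mu_1)}=\tau_n\|\Phi_n(\cdot;\mu_1+\mu_2)\|_{L^2(\mu_1)}\geq\tau_n\|\Phi_n(\cdot;\mu_1)\|_{L^2(\mu_1)}=\frac{\tau_n}{\kappa_n},
\]
whose $\liminf$ is $1$ by the previous step. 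Combining the two bounds yields $\|Q_n\|_{L^2(\mu_1)}\to1$, which is the first condition.

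With both conditions verified, Theorem \ref{nis} applied to $\mu_1$ gives $Q_n(z)/p_n(z;\mu_1)\to1$ for all $z\nch(\mu_1)$, which is exactly the assertion of the corollary. I do not expect a genuine obstacle here: the real content is the bookkeeping observation that the corollary's hypothesis coincides with the second condition of Theorem \ref{nis}, after which the norm condition splits cleanly over the two measures. The only mildly delicate point is the lower bound for $\|Q_n\|_{L^2(\mu_1)}$, and this is supplied for free by the extremal property in the manner of Remark 2.
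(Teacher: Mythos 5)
Your proof is correct and is essentially the paper's own argument: the paper proves this corollary by pointing to the calculations in the Uvarov example, which likewise apply Theorem \ref{nis} with $Q_n=p_n(\cdot;\mu_1+\mu_2)$, read off the second condition from the hypothesis as $\tau_n/\kappa_n\to1$, and verify the first condition from additivity of the $L^2$ norm over $\mu_1+\mu_2$ together with the extremal property. Your direct sandwich of $\|Q_n\|_{L^2(\mu_1)}$ via Remark 2 is just a rearrangement of the paper's step showing $\|Q_n\|_{L^2(\mu_2)}\to0$; the two are equivalent.
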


\subsection{The Christoffel Transform.}\label{chr}

A second kind of perturbation we will consider is the \textit{Christoffel Transform} of a measure (see \cite{Uvarov}), where we multiply the measure by the square modulus of a monomial; that is, we define
\begin{align}\label{nudef}
d\nu^x(z)=|z-x|^2d\mu(z).
\end{align}
The location of the point $x$ will not be arbitrary; indeed we will have to place a hypothesis on the point $x$ as in (\ref{nevailike}).  We will see later (Corollary \ref{nogrow} below) that this forces $x$ to lie in the convex hull of the support of $\mu$.

For every $n\in\bbN$, we recall the notation $K_n(y,z;\mu)$ to mean the reproducing kernel for polynomials of degree at most $n$ and the measure $\mu$, which is given by
\begin{align}\label{kerdf}
K_n(y,z;\mu)=\sum_{j=0}^np_j(y;\mu)\overline{p_j(z;\mu)}.
\end{align}
A very simple calculation provides us with the following formula (see Proposition $3$ in \cite{Uvarov}):
\begin{align}\label{chrpert}
\Phi_n(z;\nu^x)&=\frac{1}{z-x}\left(\Phi_{n+1}(z;\mu)-\frac{\Phi_{n+1}(x;\mu)}{K_n(x,x;\mu)}K_n(z,x;\mu)\right).
\end{align}

We can now prove the following result:

\begin{theorem}\label{chrstfl}
Let $\mu$ be a measure with compact support and let $\nu^x$ and $\mu$ be related by (\ref{nudef}) where $x$ satisfies (\ref{nevailike}).  Then
\begin{align}\label{addzero}
\lim_{\nri}\frac{(z-x)p_{n-1}(z;\nu^x)}{p_n(z;\mu)}=1
\end{align}
uniformly on compact subsets of $\barc\setminus\ch(\mu)$.
\end{theorem}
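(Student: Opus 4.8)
The plan is to apply Theorem \ref{nis} to the orthonormal polynomials $p_{n-1}(z;\nu^x)$, suitably packaged as the polynomials $Q_n$. The natural candidate is $Q_n(z) = (z-x)p_{n-1}(z;\nu^x)$, which has degree exactly $n$; the key formula \eqref{chrpert} expresses $\Phi_{n-1}(z;\nu^x)$ in terms of $\Phi_n(z;\mu)$ and the reproducing kernel, so $(z-x)p_{n-1}(z;\nu^x)$ should be comparable to $p_n(z;\mu)$. Concretely, I would verify the two hypotheses of Theorem \ref{nis} for this choice of $Q_n$: that $\lim_{\nri}\|Q_n\|_{L^2(\mu)}=1$ and that the ratio of leading coefficients $\tau_n/\kappa_n$ tends to $1$.

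First I would compute the leading coefficient condition. Since $\Phi_{n-1}(z;\nu^x)$ is monic of degree $n-1$, the polynomial $(z-x)\Phi_{n-1}(z;\nu^x)$ is monic of degree $n$. Thus $Q_n(z) = (z-x)p_{n-1}(z;\nu^x) = \|\Phi_{n-1}(\cdot;\nu^x)\|_{L^2(\nu^x)}^{-1}(z-x)\Phi_{n-1}(z;\nu^x)$ has leading coefficient $\tau_n = \|\Phi_{n-1}(\cdot;\nu^x)\|_{L^2(\nu^x)}^{-1}$. Hence the second hypothesis of Theorem \ref{nis} reads
\[
\lim_{\nri}\frac{\tau_n}{\kappa_n}=\lim_{\nri}\frac{\|\Phi_n(\cdot;\mu)\|_{L^2(\mu)}}{\|\Phi_{n-1}(\cdot;\nu^x)\|_{L^2(\nu^x)}}=1.
\]
The heart of the argument is therefore to show that the monic norms $\|\Phi_{n-1}(\cdot;\nu^x)\|_{L^2(\nu^x)}$ and $\|\Phi_n(\cdot;\mu)\|_{L^2(\mu)}$ are asymptotically equal under hypothesis \eqref{nevailike}. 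For the norm condition I would observe that
\[
\|Q_n\|_{L^2(\mu)}^2=\int_{\bbC}|z-x|^2|p_{n-1}(z;\nu^x)|^2\,d\mu(z)=\int_{\bbC}|p_{n-1}(z;\nu^x)|^2\,d\nu^x(z)=1
\]
by the very definition \eqref{nudef} of $\nu^x$ and the fact that $p_{n-1}(\cdot;\nu^x)$ is orthonormal with respect to $\nu^x$. So the first hypothesis holds exactly, with no limit needed. This is a pleasant simplification that the Uvarov case did not enjoy.

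The main obstacle is thus entirely the leading-coefficient ratio, and I expect to obtain it from the explicit formula \eqref{chrpert}. Taking $L^2(\nu^x)$-norms in \eqref{chrpert}, using $d\nu^x=|z-x|^2d\mu$, and exploiting orthogonality of $\Phi_{n+1}(\cdot;\mu)$ against lower-degree polynomials, one should derive a relation of the form
\[
\|\Phi_n(\cdot;\nu^x)\|_{L^2(\nu^x)}^2=\|\Phi_{n+1}(\cdot;\mu)\|_{L^2(\mu)}^2-\frac{|\Phi_{n+1}(x;\mu)|^2}{K_n(x,x;\mu)},
\]
so that the ratio $\|\Phi_{n-1}(\cdot;\nu^x)\|_{L^2(\nu^x)}^2/\|\Phi_n(\cdot;\mu)\|_{L^2(\mu)}^2$ equals $1$ minus a correction term. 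Writing that correction as $|p_n(x;\mu)|^2/K_{n-1}(x,x;\mu)$ after dividing by $\kappa_n^{-2}=\|\Phi_n(\cdot;\mu)\|_{L^2(\mu)}^2$, I would see that it tends to $0$ precisely by hypothesis \eqref{nevailike}. This establishes $\tau_n/\kappa_n\to1$. With both hypotheses verified, Theorem \ref{nis} applied to $Q_n=(z-x)p_{n-1}(z;\nu^x)$ yields \eqref{addzero} together with the uniform convergence on compact subsets of $\barc\setminus\ch(\mu)$, completing the proof.
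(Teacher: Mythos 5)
Your proposal is correct and follows essentially the same route as the paper: apply Theorem \ref{nis} with $Q_n(z)=(z-x)p_{n-1}(z;\nu^x)$, observe that $\|Q_n\|_{L^2(\mu)}=1$ holds exactly by the definition (\ref{nudef}) of $\nu^x$, and use (\ref{chrpert}) together with (\ref{nevailike}) to obtain the leading-coefficient condition. One small correction to your anticipated computation: expanding (\ref{chrpert}) in $L^2(\mu)$, the cross term vanishes by orthogonality of $\Phi_{n+1}(\cdot;\mu)$ to lower-degree polynomials, and the reproducing property gives $\int_{\bbC}|K_n(z,x;\mu)|^2\,d\mu(z)=K_n(x,x;\mu)$, so the identity comes out with a \emph{plus} sign,
\[
\|\Phi_n(\cdot;\nu^x)\|_{L^2(\nu^x)}^2=\|\Phi_{n+1}(\cdot;\mu)\|_{L^2(\mu)}^2+\frac{|\Phi_{n+1}(x;\mu)|^2}{K_n(x,x;\mu)},
\]
not a minus sign; indeed the plus sign is forced by the extremal property, since $(z-x)\Phi_n(z;\nu^x)$ is monic of degree $n+1$ and hence its $L^2(\mu)$-norm is at least $\|\Phi_{n+1}(\cdot;\mu)\|_{L^2(\mu)}$. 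This slip does not affect your argument: dividing the correction term by $\|\Phi_{n+1}(\cdot;\mu)\|_{L^2(\mu)}^2$ yields $|p_{n+1}(x;\mu)|^2/K_n(x,x;\mu)$, which tends to $0$ by (\ref{nevailike}) regardless of the sign, so $\tau_n/\kappa_n\to1$ and Theorem \ref{nis} applies exactly as you claim.
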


\begin{proof}
We wish to apply Theorem \ref{nis} with $Q_n(z)=(z-x)p_{n-1}(z;\nu^x)$.  First notice that
\[
\|Q_n\|^2_{L^2(\mu)}=\frac{\|(\cdot-x)\Phi_{n-1}(\cdot;\nu^x)\|^2_{L^2(\mu)}}{\|\Phi_{n-1}(\cdot;\nu^x)\|^2_{L^2(\nu^x)}}=1
\]
by definition, which verifies the first condition of Theorem \ref{nis}.  By formula (\ref{chrpert}), we calculate
\begin{align*}
\|\Phi_{n-1}(\cdot;\nu^x)\|^{2}_{L^2(\nu^x)}&=\|(\cdot-x)\Phi_{n-1}(\cdot;\nu^x)\|^2_{L^2(\mu)}\\
&=\|\Phi_n(\cdot;\mu)\|^2_{L^2(\mu)}+\frac{|\Phi_{n}(x;\mu)|^2}{K_{n-1}(x,x;\mu)}.
\end{align*}
The leading coefficient $\tau_n$ of $Q_n$ is just $\|\Phi_{n-1}(\cdot;\nu^x)\|^{-1}_{L^2(\nu^x)}$ so we have
\[
\tau_n=\|\Phi_n(\cdot;\mu)\|^{-1}_{L^2(\mu)}(1+o(1))
\]
as $\nri$ by our assumption (\ref{nevailike}).  This verifies the second condition of Theorem \ref{nis} and hence the desired conclusion follows.
\end{proof}

\noindent\textit{Remark.}  By Theorem \ref{circase}, if the measure $\mu$ in Theorem \ref{chrstfl} is supported on the unit circle, then in fact we get $\bbH^2$ convergence in (\ref{addzero}).

\vspace{2mm}

Combining Theorem \ref{chrstfl} with the example in Section \ref{uvar}, we deduce the following corollary:

\begin{corollary}\label{attract}
Let $\mu$ be a measure with compact support, $x\in\bbC$, and $t>0$.  If $x$ satisfies (\ref{nevailike}) then
\[
\lim_{\nri}\frac{(z-x)p_{n-1}(z;\nu^x)}{p_n(z;\mu+t\delta_x)}=1
\]
uniformly on compact subsets of $\barc\setminus\ch(\mu)$.
\end{corollary}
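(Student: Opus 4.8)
The plan is to deduce Corollary \ref{attract} directly from the two results already established, namely the example in Section \ref{uvar} (which yields (\ref{exone})) and Theorem \ref{chrstfl} (which yields (\ref{addzero})), by writing the target ratio as a product of ratios whose limits are both known to be $1$. The first step is to factor
\begin{align*}
\frac{(z-x)p_{n-1}(z;\nu^x)}{p_n(z;\mu+t\delta_x)}=
\frac{(z-x)p_{n-1}(z;\nu^x)}{p_n(z;\mu)}\cdot\frac{p_n(z;\mu)}{p_n(z;\mu+t\delta_x)}.
\end{align*}
The hypothesis is that $x$ satisfies (\ref{nevailike}), which is precisely the common assumption needed for both prior results.

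Given that $x$ satisfies (\ref{nevailike}), Theorem \ref{chrstfl} applies verbatim and tells us the first factor tends to $1$ uniformly on compact subsets of $\barc\setminus\ch(\mu)$. For the second factor, the same condition (\ref{nevailike}) is exactly the hypothesis under which the example in Section \ref{uvar} establishes (\ref{exone}), i.e. $p_n(z;\mu+t\delta_x)/p_n(z;\mu)\rightarrow1$ uniformly on compact subsets of $\barc\setminus\ch(\mu)$; since this limit is the nonzero value $1$, the reciprocal factor $p_n(z;\mu)/p_n(z;\mu+t\delta_x)$ also tends to $1$ uniformly on compact subsets of $\barc\setminus\ch(\mu)$. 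The product of two sequences of analytic functions each converging uniformly to $1$ on a compact set converges uniformly to $1$ there, which is the desired conclusion.

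The only point that warrants a moment of care is that both invoked convergences are stated relative to $\ch(\mu)$ rather than $\ch(\nu^x)$ or $\ch(\mu+t\delta_x)$, so the natural domain of the combined statement is $\barc\setminus\ch(\mu)$; since the corollary is stated on exactly this set, the domains match without any further work. I do not anticipate a genuine obstacle here, as the corollary is essentially a formal composition of the preceding two results once one observes that both are driven by the identical hypothesis (\ref{nevailike}) on the point $x$; the proof reduces to the factorization above together with the remark that a uniform limit equal to the nonzero constant $1$ may be inverted.
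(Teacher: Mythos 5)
Your proposal is correct and is exactly the argument the paper intends: the paper deduces Corollary \ref{attract} by "combining Theorem \ref{chrstfl} with the example in Section \ref{uvar}," which is precisely your factorization into the Christoffel ratio and the reciprocal of the Uvarov ratio, both governed by the same hypothesis (\ref{nevailike}). Your added care about inverting a uniform limit equal to the nonzero constant $1$ is a detail the paper leaves implicit, but it does not change the approach.
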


The following example illustrates Theorem \ref{chrstfl} and shows that in general we cannot hope to extend the results of Theorem \ref{nis} to the boundary of $\pch(\mu)$.

\vspace{2mm}

\noindent\textbf{Example.}  Let $\mu$ be two-dimensional area measure on the unit disk $\bbD$ so that $p_n(z;\mu)=\sqrt{\frac{n+1}{\pi}}z^n$.  It is easily seen that in this case, the point $1$ satisfies (\ref{nevailike}) so we will consider the Christoffel Transform given by $\nu^1$.  By the example in Section IV.6 in \cite{Suetin} (or equation (\ref{chrpert}) above), we know that
\[
p_n(z;\nu^1)=\frac{2}{\sqrt{\pi(n+1)(n+2)(n+3)}}\sum_{k=0}^n(k+1)z^k(1+z+z^2+\cdots+z^{n-k}).
\]
We then see that
\begin{align*}
\frac{(z-1)p_n(z;\nu^1)}{p_{n+1}(z;\mu)}&=\\
&\hspace{-24mm}=\frac{2(z-1)}{z^{n+1}(n+2)\sqrt{(n+1)(n+3)}}\sum_{k=0}^n(k+1)z^k(1+z+z^2+\cdots+z^{n-k})\\
&\hspace{-24mm}=\frac{2}{(n+2)\sqrt{(n+1)(n+3)}}\left(\frac{(n+1)(n+2)}{2}-\frac{n+1}{z}-\cdots-\frac{2}{z^n}-\frac{1}{z^{n+1}}\right),
\end{align*}
which clearly tends to $1$ as $\nri$ if $|z|>1$, in accordance with Theorem \ref{chrstfl}.

It is clear that
\[
\frac{(z-1)p_{n-1}(z;\nu^1)}{p_n(z;\mu)}\bigg|_{z=1}=0,
\]
so we cannot in general hope to extend Theorem \ref{nis} to include convergence on the boundary of $\pch(\mu)$.  However, in this example all of the zeros of $p_n(z;\mu)$ are contained in $\bbD$ so $(z-1)p_{n-1}(z;\nu^1)p_n(z;\mu)^{-1}$ is a function in $\bbH^{\infty}(\barc\setminus\bard)$ and as such
\[
\int_{0}^{2\pi}\frac{(\eitheta-1)p_{n-1}(\eitheta;\nu^1)}{p_n(\eitheta;\mu)}\dtpi=\frac{(z-1)p_{n-1}(z;\nu^1)}{p_n(z;\mu)}\bigg|_{z=\infty}=\frac{\kappa_{n-1}(\nu^1)}{\kappa_n(\mu)}\rightarrow1
\]
as $\nri$, which suggests we do have convergence to $1$ almost everywhere on $\partial\bbD$ in this example.  A short calculation reveals that this is the case.

\vspace{2mm}

Theorem \ref{chrstfl} also yields the following (see also Theorem 1.3 in \cite{BLS}):

\begin{corollary}\label{nogrow}
If $x\not\in\ch(\mu)$ then (\ref{nevailike}) fails.
\end{corollary}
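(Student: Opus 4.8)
The plan is to deduce this as a quick consequence of Theorem \ref{chrstfl}, arguing by contradiction. Suppose, contrary to the claim, that $x\notin\ch(\mu)$ and yet condition (\ref{nevailike}) holds at $x$. Then the hypotheses of Theorem \ref{chrstfl} are met, so I may conclude that
\[
\lim_{\nri}\frac{(z-x)p_{n-1}(z;\nu^x)}{p_n(z;\mu)}=1
\]
uniformly on compact subsets of $\barc\setminus\ch(\mu)$.

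The key observation is that the numerator of this ratio carries an explicit factor of $(z-x)$, so it vanishes identically at the point $z=x$. Since $x\notin\ch(\mu)$ and $\ch(\mu)$ is compact (hence closed), the point $x$ lies in the open region $\barc\setminus\ch(\mu)$ on which the convergence above is uniform; in particular, the value of the ratio at $z=x$ must converge to $1$. To see that evaluating at $z=x$ is legitimate, I first record the classical fact (of Fejér type) that all zeros of $p_n(\cdot;\mu)$ lie in $\ch(\mu)$, so that $p_n(x;\mu)\neq0$ and the ratio is a well-defined analytic function of $z$ in a neighborhood of $x$. Evaluating at $z=x$ then yields
\[
\frac{(z-x)p_{n-1}(z;\nu^x)}{p_n(z;\mu)}\bigg|_{z=x}=\frac{0}{p_n(x;\mu)}=0
\]
for every $n$, which is incompatible with the convergence of this quantity to $1$.

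There is no genuine obstacle here; the argument is essentially a one-line contradiction once Theorem \ref{chrstfl} is available, and the only step requiring any care is the verification that $p_n(x;\mu)\neq0$ for $x\notin\ch(\mu)$. This guarantees that $x$ is not a pole of the rational function whose asymptotics we are exploiting, so that the vanishing of the $(z-x)$ factor genuinely forces the ratio to equal $0$ at $x$, in direct conflict with uniform convergence to $1$ on a neighborhood of $x$. Hence (\ref{nevailike}) cannot hold when $x\notin\ch(\mu)$, which is exactly the assertion of the corollary.
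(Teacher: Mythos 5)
Your proof is correct and is essentially identical to the paper's own argument: both use the Fej\'er-type fact that the zeros of $p_n(\cdot;\mu)$ lie in $\ch(\mu)$ to evaluate the ratio of Theorem \ref{chrstfl} at $z=x$, obtaining the value $0$ for every $n$, which contradicts the convergence to $1$ guaranteed at points outside $\ch(\mu)$. The only difference is expository --- you spell out the contradiction structure and the well-definedness at $z=x$ more explicitly than the paper does.
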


\begin{proof}
Since all zeros of $p_n(\cdot;\mu)$ are contained in $\ch(\mu)$, we have
\[
\frac{(z-x)p_{n-1}(z;\nu^x)}{p_n(z;\mu)}\bigg|_{z=x}=0
\]
for every $n\in\bbN$, which means (\ref{nevailike}) cannot possibly hold for otherwise, by Theorem \ref{chrstfl} this expression would have to converge to $1$.
\end{proof}

\vspace{7mm}

\end{document}